\documentclass[11pt,a4paper,reqno]{amsart}

\usepackage{epsfig}
\usepackage{amsfonts}
\usepackage{amsmath}
\usepackage{amssymb}
\usepackage{amsthm}
\usepackage{times}
\usepackage{color}
\usepackage{enumerate}
\usepackage{graphicx}
\usepackage{nicefrac}
\usepackage[includehead,includefoot,margin=25mm]{geometry}
\usepackage[all]{xy}
\usepackage{multicol}

\linespread{1.0}

\newtheorem{theorem}{Theorem}[section]
\newtheorem*{theorem*}{Theorem}
\newtheorem{corollary}[theorem]{Corollary}
\newtheorem{lemma}[theorem]{Lemma}
\newtheorem{proposition}[theorem]{Proposition}

{\theoremstyle{remark}
  }
{\theoremstyle{definition}
  \newtheorem{definition}[theorem]{Definition}

  \newtheorem{example}[theorem]{Example}

}

\def\Z{\mathbb{Z}}

\newcommand{\ZZ}[0]{\ensuremath{\mathbb{Z}}}
\newcommand{\GA}[0]{\ensuremath{\mathbb{G}_{\mathrm{a}}}}
\newcommand{\GM}[0]{\ensuremath{\mathbb{G}_{\mathrm{m}}}}
\newcommand{\AF}[0]{\ensuremath{\mathbb{A}}}

\newcommand{\OO}[0]{\ensuremath{\mathcal{O}}}

\newcommand{\dom}[0]{\ensuremath{\operatorname{dom}}}

\newcommand{\fract}[0]{\ensuremath{\operatorname{Frac}}}

\newcommand{\spec}[0]{\ensuremath{\operatorname{Spec}}}

\newcommand{\ord}[0]{\ensuremath{\operatorname{ord}}}

\makeatletter \newcommand*\bigcdot{\mathpalette\bigcdot@{.5}} \newcommand*\bigcdot@[2]{\mathbin{\vcenter{\hbox{\scalebox{#2}{$\m@th#1\bullet$}}}}} \makeatother

\begin{document}

\title{On rational multiplicative group actions}

\author{Luis Cid}
\address{Instituto de Matem\'atica y F\'\i sica, Universidad de Talca,
  Casilla 721, Talca, Chile.}%
\email{luis.cid@inst-mat.utalca.cl}

\author{Alvaro Liendo} %
\address{Instituto de Matem\'atica y F\'\i sica, Universidad de Talca,
  Casilla 721, Talca, Chile.}%
\email{aliendo@inst-mat.utalca.cl}

\date{\today}

\thanks{{\it 2000 Mathematics Subject
    Classification}:  14E07; 14L30; 14R20.\\
 \mbox{\hspace{11pt}}{\it Key words}: Rational multiplicative group actions; rational semisimple derivations; locally
nilpotent derivations.\\
 \mbox{\hspace{11pt}}Both authors were partially supported by the grant 346300 for IMPAN from the Simons Foundation and the matching 2015-2019 Polish MNiSW fund. The first author was also partially supported by CONICYT-PFCHA/Doctorado Nacional/2018-folio 21181058. The first and second authors were partially supported by Fondecyt project 1200502.}

\begin{abstract}
We establish a one-to-one correspondence between rational $\GM$-actions on an algebraic variety $X$ and derivations $\partial\colon K_X\to K_X$ of the field of fractions $K_X$ of $X$ satisfying that there exists a generating set $\{a_i\}_{i\in I}$ of $K_X$ as a field such that $\partial(a_i)=\lambda_i a_i$ with $\lambda_i \in \ZZ$ for all $i\in I$. We call such derivations rational semisimple. Furthermore, we also prove the existence of a rational slice for every rational semisimple derivation, i.e., an element $s\in K_X$ such that $\partial(s)=s$. By analogy with the case of additive group actions case, we prove that  $K_X\simeq K_X^{\GM}(s)$ and that under this isomorphism the derivation $\partial$ is given by $\partial=s\frac{d}{ds}$. Here, $K_X^{\GM}$ is the field of  invariant of the $\GM$-action.
\end{abstract}

\maketitle

\section{Introduction}

Let $k$ be an algebraically closed field of characteristic zero. By a variety we mean an integral separated scheme of finite type. We let $\OO_X$ be its structure sheaf and $K_X$  be its the field of rational functions so that $K_X=\fract \OO_X(U)$ for any affine open set $U\subset X$.

We also let $\GM$ and $\GA$ be the multiplicative group and the additive group over $k$, respectively. It is well known that regular additive group actions on affine varieties are in one-to-one correspondence with certain derivations called locally nilpotent \cite{Fre06,Dai03}. Indeed, letting $X$ be an affine variety a locally nilpotent derivation on $X$ is a $k$-derivation $\partial\colon \OO_X(X)\to \OO_X(X)$ such that for every $f\in \OO_X(X)$ there exists $i\in \ZZ_{\geq 0}$ with $\partial^i(f)=0$. All derivations in this paper are $k$-derivations so we will call them simply derivations. Given a locally nilpotent derivations $\partial$ on $X$ we obtain a regular $\GA$-actions $\varphi\colon \GA\times X\to X$  on $X$ via the exponential map 
$$\varphi^*=\exp(z\partial)\colon \OO_X(X)\to\OO_X(X)[z] \quad\mbox{given by}\quad f\mapsto \sum_{i\geq0}\frac{z^i\partial^i(f)}{i!}\,.$$
On the other hand, given a regular $\GA$-action $\varphi\colon \GA\times X\to X$  on $X$ we obtain a locally nilpotent derivations $\partial$ on $X$ via
$$\partial\colon \OO_X(X)\to \OO_X(X) \quad\mbox{given by}\quad  f\mapsto \operatorname{ev}_{0}\circ \frac{d}{dz}\circ \varphi^*(f)\,,$$
where $\operatorname{ev}_{0}\colon\OO_X(X)[z]\to \OO_X(X)$ is the evaluation morphism  in $z=0$.

In \cite{DuLi16} Dubouloz and the second author, introduced a class of rationally integrable derivations that generalized locally nilpotent derivations to the rational setting. Indeed, a derivation $\partial\colon K_X\to K_X$ is called rationally integrable if the exponential map 
$$\exp(z\partial)\colon K_X\to K_X[|z|] \quad\mbox{given by}\quad f\mapsto \sum_{i\geq0}\frac{z^i\partial^i(f)}{i!}\,,$$
factors through the ring $K_X(z)\cap K_X[|z|]$. Their main theorem provides a one-to-one correspondence between rationally integrable derivations $\partial\colon K_X\to K_X$ and rational $\GA$-actions $\varphi\colon \GA\times X\dashrightarrow X$ on $X$. The correspondence is given similarly to above via
$$\varphi^*=\exp(z\partial) \quad \mbox{and}\quad \partial= \operatorname{ev}_{0}\circ \frac{d}{dz}\circ \varphi^*(f)\,,$$
after recalling that $K_X(z)\cap K_X[|z|]=\{r(z)\in K_X(z)\mid \operatorname{ord}_0(r)\geq 0\}$ so that $\operatorname{ev}_{0}$ is well defined.

In this paper we expand and generalize the results in \cite{DuLi16} to allow a classification of rational $\GM$-action. It is well known that regular $\GM$-action on an affine variety $X$ are in one-to-one correspondence with semisimple derivations $\partial\colon \OO_X(X)\to \OO_X(X)$ having integer eigenvalues. Recall that such a derivation is semisimple if there exists a basis $\{a_i\}_{i\in I}$ of $A$ as vector space such that $\partial(a_i)=\lambda_i a_i$ with $\lambda_i\in k$. 

In Definition~\ref{def:rat-semisimple}, we introduce rational semisimple derivations. A derivation $\partial\colon K_X\to K_X$ on an algebraic variety $X$ is rational semisimple if there exist a generating set $\{a_i\}_{i\in I}$ of $K_X$ (as field) such that $\partial(a_i)=\lambda_i a_i$ with $\lambda_i\in \mathbb{Z}$. Every semisimple derivation whose eigenvalues are integers numbers is rational semisimple. Our main result in this paper is Theorem~\ref{main-th} establishing a one-to-one correspondence between rational $\GM$-actions $\varphi\colon \GM\times X\dashrightarrow X$ on $X$ and rational semisimple derivations $\partial\colon K_X\to K_X$ on $X$. The correspondence is as follows: we prove in Corollary~\ref{Cor5.2} that the image of $\varphi^*$ is contained in $K_X(t)\cap K_X[|t-1|]=\{r(t)\in K_X(t)\mid \operatorname{ord}_1(r)\geq 0\}$ and so we obtain $\partial$ from $\varphi$ via
$$\partial\colon K_X\to K_X \quad\mbox{given by}\quad  f\mapsto \operatorname{ev}_{1}\circ \frac{d}{dt}\circ \varphi^*(f)\,.$$
As for the other direction, letting $\sigma$ be the  isomorphism of formal power series rings given by the logarithmic power series
$$\sigma\colon K_X[|z|]\to K_X[|t-1|],\quad \mbox{given by} \quad z\mapsto \sum_{i\geq 1}(-1)^{i+1}\frac{(t-1)^i}{i}\,.$$
we recover the rational action $\varphi$ from $\partial$ via $\varphi^*=\sigma\circ\exp(z\partial)$.

As a consequence of our main result, we prove in Corollary~\ref{rational-slice} the existence of a rational slice for every rationally semisimple derivation, i.e., an element $s\in K_X$ such that $\partial(s)=s$. Moreover, we prove in Proposition~\ref{cor desc} that $K_X=K_X^{\GM}(s)$ and that under this isomorphism the derivation $\partial$ is given by $\partial=s\frac{d}{ds}$. Here, $K_X^{\GM}$ is the field of  invariant of the $\GM$-action.

Finally,  we provide  in Proposition~\ref{prop:Regular-actions} a characterization of regular actions of  the  multiplicative  group on  the  class  of  varieties that are proper over the spectrum of its ring of global regular functions. Such characterization agrees with the one recalled above in the particular case of affine varieties.

\subsection*{Acknowledgements}  

Part of this work was done during a stay of both authors at IMPAN in Warsaw. We would like to thank IMPAN and the organizers of the Simons semester ``Varieties: Arithmetic and Transformations'' for the hospitality.

\section{Rational $\GM$-action}

Let $\mu\colon\GM\times \GM\to \GM$ the morphism given by the group law  $(t_1,t_2)\mapsto t_1t_2$, and let  $\operatorname{e}_{\GM}\colon\,\operatorname{Spec}(k)\to \GM$ be the neutral element map. A rational action of the multiplicative group is a rational map $\varphi\colon\GM\times X \dashrightarrow X , (x,t)\mapsto t\cdot x$ such that the following diagrams are commutative:
\begin{eqnarray}
\xymatrix@=4.5em{ \GM\times \GM\times X \ar@{-->}[r]^{\mathrm{id}_{\GM}\times \varphi} \ar[d]_{\mu\times \mathrm{id}_X} & \GM\times X \ar@{-->}[d]^{\varphi} &  \mathrm{Spec}(k)\times X \ar[r]^{e_{\GM}\times \mathrm{id}_X} \ar[dr]_{\mathrm{pr}_2} & \GM\times X\ar@{-->}[d]^{\varphi} \\ \GM\times X \ar@{-->}[r]^{\varphi} & X & &  X.}
\label{eq:GeomDiagrams2}
\end{eqnarray}

We let $\dom(\varphi)$ be largest open subset of $\GM\times X$ where $\varphi$ is well defined. If  $(g,x)\in \dom(\varphi)$, we denote $\varphi(g,x)$ simply by $g\cdot x$.  The next lemma show that $\dom(\varphi)\cap (\{g\} \times X)$ is a non-empty open subset of  $ \{g\} \times \GM$. In particular for $g=1$, $\dom(\varphi)\cap (\{1\} \times X)$ is a open subset not empty. This will allow us to exhibit a criterion for the existence of rational action in terms of the function field of $X$.

\begin{lemma}\label{lem-demazure}
Let $X$ be an algebraic variety endowed with a rational $\GM$-action. For every fixed  $g\in \GM$, we define $V_{g}=\dom(\varphi) \cap(\{g\} \times X)$.
$V_{g}$ is a non-empty open subset of $\{g\} \times X \simeq X$ and the morphism
\begin{eqnarray*}
\varphi_{g}\colon V_{g} & \dashrightarrow & X\\
 x & \mapsto & \varphi(g,x)
\end{eqnarray*}
is dominant.
\end{lemma}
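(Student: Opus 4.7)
The openness of $V_g$ in $\{g\}\times X$ is immediate, since $\dom(\varphi)$ is open by definition; the substantive content is the non-emptiness of $V_g$ together with the dominance of $\varphi_g$, which I plan to establish simultaneously. The starting point is the neutral element axiom: the second commutative diagram expresses an equality of rational maps $\mathrm{Spec}(k)\times X\to X$, and since the right-hand side $\mathrm{pr}_2$ is regular everywhere, this forces $\dom(\varphi)\cap(\{1\}\times X)$ to be non-empty and dense in $\{1\}\times X$, so in particular $V_1\neq\emptyset$.

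The heart of the proof is the rational self-map $\Phi\colon\GM\times X\dashrightarrow\GM\times X$ given by $(t,x)\mapsto(t,t\cdot x)$. The associativity and neutral element axioms imply that $\Psi(t,y):=(t,t^{-1}\cdot y)$ is a two-sided rational inverse of $\Phi$, so $\Phi$ is birational and commutes with the projection $p\colon\GM\times X\to\GM$. Taking a dense open $W\subset\dom(\Phi)$ on which $\Phi$ is an isomorphism onto an open subset of $\GM\times X$, the set $\Omega_0:=p(W)\subset\GM$ is non-empty open; for every $g\in\Omega_0$, the restriction of $\Phi$ to $W\cap(\{g\}\times X)$ is an open immersion into $\{g\}\times X$, whence $\varphi_g$ is a birational self-map of $X$, so $V_g$ is non-empty and $\varphi_g$ is dominant.

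For an arbitrary $g\in\GM$ I would transfer this conclusion by a multiplicative factoring trick. Since $\GM$ is irreducible, both $\Omega_0$ and $g\Omega_0^{-1}$ are non-empty open subsets, so their intersection is non-empty; choosing $h$ in this intersection yields a factorisation $g=h\cdot(g/h)$ with $h,g/h\in\Omega_0$. Dominance of $\varphi_h$ then ensures that $V_h\cap\varphi_h^{-1}(V_{g/h})$ is a non-empty open subset of $X$; on this subset the expression $x\mapsto\varphi(g/h,\varphi(h,x))$ defines a regular morphism to $X$, and the associativity diagram identifies it, as a rational map in $x$, with $\varphi(g,\cdot)$. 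Since $\dom(\varphi)$ is the maximal domain of $\varphi$ as a rational map, this local regular representative forces $V_g\neq\emptyset$; dominance of $\varphi_g$ then follows from the identity $\varphi_g=\varphi_{g/h}\circ\varphi_h$ as a composition of dominant rational maps.

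The main subtlety lies in the second step, where one must extract birationality of the fibre map $\varphi_g$ from birationality of $\Phi$ over $\GM$; the argument hinges on openness of the projection $\GM\times X\to\GM$ and the fact that $\GM$ is a curve. A secondary technical point, in the final step, is that one must invoke the maximality of $\dom(\varphi)$ to promote a regular local representative of $\varphi$ at $(g,x)$ into the conclusion $(g,x)\in\dom(\varphi)$.
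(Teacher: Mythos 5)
Your overall strategy is close in spirit to the paper's (which follows Demazure): both arguments ultimately rest on rewriting $\varphi(g,x)$ as $\varphi(gh^{-1},\varphi(h,x))$ for a suitable auxiliary $h$. Your first two steps are fine; in particular the observation that the graph map $\Phi=(\mathrm{pr}_1,\varphi)$ is birational, so that $\varphi_g$ is even a birational self-map of $X$ for every $g$ in a non-empty open $\Omega_0\subseteq\GM$, is clean and correct.

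The gap is in the last step, and it is not the ``secondary technical point'' you flag but something more serious. Once you form $\theta:=\varphi_{g/h}\circ\varphi_h$, you have a dominant rational self-map of $X$ defined on a non-empty open $U'\subseteq X$; but the phrase ``the associativity diagram identifies it, as a rational map in $x$, with $\varphi(g,\cdot)$'' is circular, because the restriction of $\varphi$ to the fibre $\{g\}\times X$ exists as a rational map only if $V_g\neq\emptyset$, which is exactly what you are trying to prove. Nor can you invoke maximality of $\dom(\varphi)$ as stated: $\theta$ is regular on a subset of the codimension-one fibre $\{g\}\times X$, not on an open neighbourhood of any point $(g,x)$ in $\GM\times X$, and only the latter kind of representative forces $(g,x)\in\dom(\varphi)$. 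The repair is to keep the group variable alive, which is precisely what the paper's map $\beta_g$ does. Concretely, fix $h\in\Omega_0\cap g\Omega_0^{-1}$ (generic in $\Omega_0$ if needed) and consider $\gamma\colon \GM\times X\dashrightarrow X$, $(t,x)\mapsto \varphi(th^{-1},\varphi(h,x))$; this is a composition of dominant rational maps of $\GM\times X$, hence has non-empty open domain there, and by associativity it agrees with $\varphi$ on a dense open subset of $\GM\times X$, so $\dom(\gamma)\subseteq\dom(\varphi)$ by maximality. Since $gh^{-1}\in\Omega_0$ and $\varphi_h$ is dominant, $\dom(\gamma)$ meets $\{g\}\times X$, giving $V_g\neq\emptyset$; dominance of $\varphi_g$ then follows as you say. (The paper's $\beta_g(h,x)=\varphi(gh^{-1},\varphi(h,x))$, with $h$ as the variable, is the same device run for all $g$ at once, and it bypasses your step 2 entirely by noting directly that $\beta_g$ is a composition of dominant rational maps which depends only on $x$.)
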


\begin{proof}
  Following Demazure in \cite{Dem70} we  consider the  morphism
 $u_{g}\colon \GM\to \GM$
, $h\mapsto gh^{-1}$ whose inverse is  $(u_{g})^{-1}\colon \GM\to \GM$
, $h\mapsto h^{-1}g$. We define $\beta_{g}=\varphi\circ(u_{g}\times \operatorname{id}_{X})\circ(\operatorname{pr}_1,\varphi)$,where $\operatorname{pr}_1$ is the projection in the first coordinate and $\operatorname{id}_{X}$ is the identity in $X$. As
\begin{center}
$\beta_{g}\colon\GM\times X \stackrel{ (\operatorname{pr}_{1},\varphi) }{\displaystyle\dashrightarrow} \GM\times X \stackrel{u_{g}\times\operatorname{Id}_{X}  }{\displaystyle\dashrightarrow} \GM\times X \stackrel{\varphi  }{\displaystyle\dashrightarrow} X$
\end{center}
We have $\beta_{g}\colon \GM\times X  \dashrightarrow X$, $(h,x)\mapsto  \varphi(g,x)$ is dominant for all choice of $g$.

Note that this map only depend of $x$.
Then $\dom(\beta_{g})=\GM \times U$, with $U \subseteq X$ an open subset. We have $\beta_{g}=\varphi_{g}\circ\operatorname{pr}_{2}$, where $\operatorname{pr}_2$ is the projection of $\GM\times X$ into the second coordinate. Hence,  $U=\operatorname{pr}_{2}(V_{g})$. Since $\varphi_{g}$ is a rational map and $V_{g}=\{g\}\times \dom(\varphi_{g})$, we obtain $V_{g}\not=\emptyset$.  Moreover $\beta_{g}=\varphi_{g}\circ\operatorname{pr}_{2}$, which means $\varphi_{g}$ is dominant because $\operatorname{Im}(\beta_{g})\subseteq \operatorname{Im}(\varphi_{g})$. 
\end{proof}

We can apply the case $g=1$ to obtain the following characterization.

\begin{corollary}\label{Cor5.2}
Let $\varphi\colon \GM\times X \dashrightarrow X$ be a rational action then $\operatorname{Im}(\varphi^{\ast})\subseteq \mathcal{O}_{\nu}$, where $\mathcal{O}_{\nu}=\{ r(t) \in K(t) \,\,| \, \,
\operatorname{ord}_{1}(r)\geq 0 \}$ is the discrete valuation ring of 
$K(t)$ and $\ord_1$ is the order of vanishing in $t=1$.
\end{corollary}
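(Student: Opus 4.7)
The plan is to give a geometric reinterpretation of $\mathcal{O}_\nu$ as the local ring at a prime divisor of $\GM\times X$ and then deduce the statement from Lemma~\ref{lem-demazure} applied to $g=1$. Under the natural identification $K_{\GM\times X}=K_X(t)$, the closed subscheme $D=\{1\}\times X\subset \GM\times X$ is a prime divisor cut out locally by the principal equation $t-1$. A local computation shows that the local ring $\OO_{\GM\times X,\eta_D}$ at the generic point $\eta_D$ of $D$ is the discrete valuation ring with valuation $\ord_1$; in other words, $\mathcal{O}_\nu=\OO_{\GM\times X,\eta_D}$. Consequently, proving $\ima(\varphi^*)\subseteq \mathcal{O}_\nu$ is equivalent to showing that for every $f\in K_X$ the rational function $\varphi^*(f)\in K_X(t)$ is defined at $\eta_D$, i.e., that the indeterminacy locus of the rational map $\varphi^*(f)\colon \GM\times X\dashrightarrow \PP^1$ does not contain $\eta_D$.

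For the second step I would invoke Lemma~\ref{lem-demazure} with $g=1$. This yields a non-empty open $V_1=\{1\}\times U\subseteq \{1\}\times X$ contained in $\dom(\varphi)$. Because $D$ is irreducible and $V_1$ is a non-empty open subset of $D$, the generic point $\eta_D$ belongs to $V_1$, and in particular to $\dom(\varphi)$. The identity axiom of the action, encoded in the right-hand diagram of~\eqref{eq:GeomDiagrams2}, forces the equality $\varphi(1,x)=x$ for every $(1,x)\in V_1$, so that $\varphi$ restricted to $V_1$ is simply the projection to the second factor.

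For the final step, fix $f\in K_X$ and pick a non-empty open $U'\subseteq X$ on which $f$ is regular. By the previous step, the composition $\varphi^*(f)=f\circ\varphi$ is regular at every point of $\{1\}\times(U\cap U')$, a non-empty open subset of $D$. Hence the open subset of $\GM\times X$ on which $\varphi^*(f)$ is defined meets $D$ non-trivially and therefore contains the generic point $\eta_D$. This gives $\varphi^*(f)\in \OO_{\GM\times X,\eta_D}=\mathcal{O}_\nu$, completing the proof.

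The only mildly technical point is the dictionary identifying the abstractly defined DVR $\mathcal{O}_\nu\subset K_X(t)$ with the local ring at the generic point of $\{1\}\times X$; once this is in place, Lemma~\ref{lem-demazure} together with the identity axiom produces the argument essentially automatically, and I do not foresee any real obstacle.
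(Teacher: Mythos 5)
Your proof is correct and follows essentially the same route as the paper: both arguments rest on Lemma~\ref{lem-demazure} at $g=1$ to guarantee that $\varphi$ is defined and dominant near the generic point of $\{1\}\times X$, and then identify membership in $\mathcal{O}_{\nu}$ with regularity along that divisor. The paper phrases this algebraically on affine charts via the injective composition $B\to A\to A/A(t-1)$ (which in fact gives the slightly stronger conclusion $\ord_1(\varphi^{\ast}(f))=0$ for $f\neq 0$), while you phrase it geometrically via the local ring at the generic point of the divisor $t=1$; these are the same argument in two dialects.
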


\begin{proof}
Let $V'\subseteq \GM\times X$ and $X'\subseteq X$ be affine open sets such that $\varphi|_{V'}\colon V'\to X'$ is regular. We let $\OO_{\GM\times X}(V')= A\subseteq K(t)$ and $\OO_X(X')= B\subseteq K$. By Lemma~\ref{lem-demazure}, we may and will assume $V'\cap V_1\neq \emptyset$ and so it is an dense open set of $\{1\}\times X$. Now, the composition $V'\cap V_1\to V'\to X'$ is dominant by Lemma~\ref{lem-demazure} and induces algebra homomorphisms
$$B\stackrel{\varphi^*}{\longrightarrow} A\longrightarrow A/A(t-1)\,,$$ 
and this composition is injective. This in turn shows that $\ord_1(\varphi^*(b))=0$ for every $b\in B$ different  from $0$. Since $K=\fract B$ we have $\ord_1(\varphi^*(f))=0$ for every $f\in K$, which proves the corollary.
\end{proof}

Given a rational action $\varphi\colon \GM\times X \dashrightarrow X$, for every fixed $g\in \GM$ we obtain a birational automorphism $$\varphi_{g}\colon X  \dashrightarrow X, \qquad x\mapsto  \varphi(g,x)$$
since $\varphi(1,x)=\operatorname{Id}_X$ and for every $g,g'\in \GM$ we have $\varphi_{g} \circ \varphi_{g'}(x)=\varphi_{gg'}(x)$. Moreover, the map $\GM\to \operatorname{Bir(X)}$ sending $g$ to $\varphi_g$ is a group homomorphism. Finally, a rational action  $\varphi\colon \GM\times X \dashrightarrow X$ such that $\dom(\varphi)=\GM\times X$ is a regular action.

\subsection{Criterion for existence of $\GM$-rational actions}

A $\GM$-rational action $\varphi\colon\GM\times X \dashrightarrow X$ in a variety $X$,is equivalent to the co-action homomorphism  $\varphi^{\ast}_t\colon K_X\to k(\GM\times X)=K_X(t)$ where the map $\varphi^{\ast}_t$  factors through the subalgebra $\mathcal{O}_{\nu}=\{ r(t) \in K_X(t) \,\,| \, \, \operatorname{ord}_{1}(r)\geq 0 \}$, with $\mathfrak{m}_{\nu}=\{ r(t) \in K_X(t) \,\,| \, \, \operatorname{ord}_{1}(r)> 0 \}$. The co-action morphism is characterized by the commutativity of the  follow diagrams:
 \begin{eqnarray} \label{comorphism}
\xymatrix@=4.5em{ K_X \ar@{->}[r]^{\varphi^{\ast}_{t_1}} \ar[d]_{\varphi^{\ast}_{t_2}} & K_X (t_1) \ar@{->}[d]^{t_1\mapsto t_1t_2} & & K_X\simeq \mathcal{O}_{\nu}/ \mathfrak{m}_{\nu} \ar[r]^{\varphi^{\ast}_t} \ar[dr]_{\operatorname{Id}_{K_X}} & \mathcal{O}_{\nu}\ar@{->}[d]^{\operatorname{ev}_1} \\ K_X(t_2) \ar@{->}[r]^{\widetilde{\varphi}} &  K_X(t_1,t_2) & & & K_X\simeq \mathcal{O}_{\nu}/ \mathfrak{m}_{\nu}}
\label{eq:GeomDiagrams3}
\end{eqnarray}

The following proposition is classical.

\begin{proposition}\label{prop}
Le $X$ be a variety. $X$ admits a nontrivial rational $\GM$-action if only if it is birationally isomorphic to $Y\times\mathbb{P}^1$ for some $k$-variety $Y$.
\end{proposition}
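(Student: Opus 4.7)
The plan is to prove the two directions separately.

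For the easy direction, if $\psi\colon X\dashrightarrow Y\times\PP^1$ is a birational isomorphism, then the standard regular $\GM$-action on $Y\times\PP^1$ given by $t\cdot(y,[u:v])=(y,[tu:v])$ transports along $\psi$ to a nontrivial rational $\GM$-action on $X$.

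For the converse, suppose $\varphi\colon\GM\times X\dashrightarrow X$ is a nontrivial rational action. I would introduce the subfield of invariants $L:=K_X^{\GM}=\{f\in K_X\mid \varphi_t^{\ast}(f)=f \text{ in } K_X(t)\}$, which is properly contained in $K_X$ by nontriviality, and then apply Rosenlicht's theorem on rational quotients (on an open subset where the action is regular) to produce a variety $Y$ with $K_Y=L$ together with a dominant rational map $\pi\colon X\dashrightarrow Y$ whose generic fibers are closures of $1$-dimensional $\GM$-orbits; in particular $\trdeg_L K_X=1$.

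It remains to show that $K_X=L(s)$ for some $s\in K_X$. To this end, I would pass to the generic fiber $C$ of $\pi$, a smooth curve over $L$ with a nontrivial $\GM_L$-action. Since each nontrivial $\GM$-orbit is of the form $\GM/H$ for a finite closed subgroup $H\subset\GM$ and is therefore isomorphic to $\GM$ as a variety, the smooth projective completion $\overline{C}$ of $C$ over $L$ is a form of $\PP^1$. Borel's fixed-point theorem applied to the connected solvable group $\GM_L$ acting on the proper $L$-variety $\overline{C}$ yields an $L$-rational fixed point; since any form of $\PP^1$ over $L$ carrying an $L$-rational point is $L$-isomorphic to $\PP^1_L$, we conclude that $\overline{C}\simeq\PP^1_L$. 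Consequently $K(C)=L(s)$ for some $s$, so $K_X\simeq L(s)$, which gives the required birational isomorphism $X\sim Y\times\PP^1$.

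The main obstacle is the last step: identifying the generic fiber of $\pi$ with the split form $\PP^1_L$ rather than with a nontrivial twist. Borel's theorem resolves this cleanly by producing an $L$-rational point on the completion from the fixed locus of the $\GM_L$-action, which is nonempty because $\overline{C}$ is proper and $\GM_L$ is connected and solvable.
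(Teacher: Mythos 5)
Your overall strategy (Rosenlicht quotient, then identify the generic fibre with $\PP^1_L$) is reasonable and genuinely different from the paper, which simply cites Rosenlicht--Matsumura--Popov for this classical fact and, in effect, reproves the hard direction later by elementary means: the semi-invariant/Bezout construction of a rational slice $s$ given after Definition~\ref{def:rational-slice}, combined with Proposition~\ref{cor desc}, yields $K_X=K_X^{\GM}(s)$ directly without ever analyzing the generic fibre. The easy direction of your argument is fine.

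However, the step you yourself single out as the crux contains a genuine gap. Borel's fixed-point theorem over a field $L$ that is not algebraically closed does \emph{not} produce an $L$-rational fixed point from connectedness and solvability alone: the standard statement for a split connected solvable group $G$ acting on a proper $L$-variety $V$ requires the hypothesis $V(L)\neq\emptyset$ and then yields a fixed point in $V(L)$. Here $\overline{C}(L)\neq\emptyset$ is essentially what you are trying to prove, so the appeal is circular; and the unqualified principle is false (the trivial $\GM_L$-action on a conic without $L$-points is a connected solvable group acting on a proper $L$-variety with no $L$-rational fixed point). Note also that even for the nontrivial action the fixed-point subscheme is a priori only a degree-two finite \'etale $L$-scheme, which could well be $\spec$ of a quadratic field extension rather than two $L$-points. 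The conclusion you want is nevertheless true, but it needs a different mechanism: for instance, the open orbit $U=\overline{C}\setminus\overline{C}^{\GM}$ is, after dividing $\GM$ by the generic stabilizer, a torsor under $\GM$ over $L$, hence trivial by Hilbert~90, so $U(L)\neq\emptyset$ and $\overline{C}\simeq\PP^1_L$; alternatively, a nonsplit conic has automorphism group $\operatorname{PGL}_1(D)$ with $D$ a quaternion division algebra, which is anisotropic and therefore admits no nontrivial homomorphism from the split torus $\GM_L$. With one of these substitutions (or simply with the paper's own slice argument, which sidesteps the generic fibre entirely), your proof goes through.
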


\begin{proof}
See \cite{Ros56,Mat63,Pop16}.
\end{proof}

Mimicking the case of $\GA$-actions on affine geometry, we we have the following definition:

\begin{definition} \label{def:rational-slice}
 Let $X$ be a $\GM$-variety, an element $s\in K_X$ such that $\varphi^{\ast}_t(s)=ts$ is called rational slice.
\end{definition}

For each faithful rational action, a rational slice always exists. Indeed, since the action is faithful, there exists  two semi-invariant $a,b\in K_X$ whose weights $n,m$ are relatively prime, i.e.,  $\varphi^{\ast}_t(a)=t^{n}a$ and $\varphi^{\ast}_t(b)=t^{m}b$. By Bezout theorem we have that there exist $c,d\in \mathbb{Z}$ such that $nd+mc=1$. Hence $s=a^db^c$ satisfies $\varphi^{\ast}_t(s)=st$  and so is a rational slice.

A derivation on $k$-algebra $A$, is a linear map $\partial\colon A\to A$ such that satisfy the Leibniz rules, $\partial(ab)=a\partial(b)+b\partial(a)$. We define the kernel of a derivation $\partial$ as its kernel as a linear map, i.e.,  $\operatorname{ker}(\partial):=\{a\in A| \partial(a)=0\}$. The set of derivations over $A$ is denoted by $\operatorname{Der}(A)$.
 We say $\partial$ is a semisimple derivation on $A$ if there exists a basis $\{a_i\}_{i\in I}$ of $A$ as $k$-vector space of eigenvalues such that $\partial(a_i)\in ka_i$, we will focus in the subset of semisimple derivations whose eigenvalues are integers numbers. For more details over semisimple derivations see \cite{Now01,Now94}. Is known that the regular $\GM$-actions are in correspondence with the set of semisimple derivations whose eigenvalues are integers numbers.

Analogously with the definition of semisimple derivations over a $k$-algebra, we will give a definition of derivation over $K_X$, we will call rational semisimple.


\begin{definition} \label{def:rat-semisimple}
 Let be $K_X$ the field of rational function associated to algebraic variety $X$, $\partial\in \operatorname{Der}(K_X)$. We say $\partial$ is rational semisimple if there exist a generating set $\{a_i\}_{i\in I}$ of $K_X$ (as field) such that $\partial(a_i)=\lambda_i a_i$ with $\lambda_i\in \mathbb{Z}$.
\end{definition}

Let $\partial\colon K_X\to K_X$ a $k$-derivation. Denoting  the $i$-th iteration of $\partial$ by $\partial^{i}$ and $\partial^0$ is the identity map, we define the exponential map 
$$\exp(z\partial)\colon K_X\to K_X[|z|],\quad  \mbox{given by} \quad f\mapsto \sum_{i\geq 0}\frac{z^i\partial^{i}(f)}{i!}\,.$$
Furthermore, since $\partial$ is a $k$-derivation, the following proposition shows that $\exp(z\partial)$ is a $k$-rings homomorphism.

\begin{proposition}\label{prop2.7}
Let be $\partial$ a $k$-derivation on $K_X$, then exponential map $\exp(z\partial)\colon K_X\to K_X[|z|]$ is a $k$-ring homomorphism.
\end{proposition}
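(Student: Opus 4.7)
The plan is to verify the three defining properties of a $k$-ring homomorphism for $\exp(z\partial)$: $k$-linearity, additivity, and multiplicativity, together with sending $1$ to $1$. The first, second and fourth are immediate from the definition: $k$-linearity of each iterate $\partial^i$ gives $k$-linearity of the sum, and since $\partial^i(1)=0$ for $i\geq 1$ we get $\exp(z\partial)(1)=1$. So the content of the proposition is the multiplicativity
$$\exp(z\partial)(fg)=\exp(z\partial)(f)\cdot\exp(z\partial)(g)\qquad\text{for all } f,g\in K_X,$$
viewed as an identity of formal power series in $K_X[|z|]$.

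The key lemma is the general Leibniz formula
$$\partial^n(fg)=\sum_{i=0}^{n}\binom{n}{i}\partial^i(f)\,\partial^{n-i}(g),$$
which I would prove by induction on $n$. The base case $n=0$ is trivial and the case $n=1$ is exactly the Leibniz rule assumed in the definition of a derivation. For the inductive step one applies $\partial$ to both sides of the $n$-th formula and uses the identity $\binom{n}{i}+\binom{n}{i-1}=\binom{n+1}{i}$ to collect terms; this is a routine computation that works verbatim in any $k$-algebra, and in particular in the field $K_X$.

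Once the Leibniz formula is available, one computes the coefficient of $z^n$ in the Cauchy product
$$\exp(z\partial)(f)\cdot \exp(z\partial)(g)=\Bigl(\sum_{i\geq 0}\frac{z^i\partial^i(f)}{i!}\Bigr)\Bigl(\sum_{j\geq 0}\frac{z^j\partial^j(g)}{j!}\Bigr)=\sum_{n\geq 0}z^n\sum_{i+j=n}\frac{\partial^i(f)\partial^j(g)}{i!\,j!}.$$
Factoring $1/n!$ out of the inner sum turns the binomial coefficients back into the expression provided by the Leibniz formula, so the coefficient of $z^n$ is $\partial^n(fg)/n!$, which is exactly the coefficient of $z^n$ in $\exp(z\partial)(fg)$.

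The only conceptual subtlety, and the one I would flag as the main obstacle, is that $\partial$ acts on a field rather than on a polynomial or power-series algebra, so one must be careful that the infinite series defining $\exp(z\partial)(f)$ genuinely lies in $K_X[|z|]$; this is automatic since each coefficient $\partial^i(f)/i!$ is an individual element of $K_X$, and the Cauchy product of two formal power series in $K_X[|z|]$ is computed coefficient-wise, so no convergence issue arises. With this observation in place, the multiplicativity reduces to the purely algebraic Leibniz identity, and the proposition follows.
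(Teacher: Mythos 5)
Your argument is correct and follows essentially the same route as the paper: both reduce multiplicativity to the general Leibniz formula $\partial^n(fg)=\sum_{i=0}^{n}\binom{n}{i}\partial^i(f)\partial^{n-i}(g)$ and then identify the coefficient of $z^n$ in the Cauchy product with $\partial^n(fg)/n!$. Your additional remarks on $1\mapsto 1$ and on well-definedness in $K_X[|z|]$ are harmless elaborations of points the paper leaves implicit.
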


\begin{proof}
The exponential map $\exp(z\partial)$ is $k$-linear since $\partial$ is $k$-linear. As for the product structure, we have 
\begin{align*}
\exp(z\partial)(fg)&=\sum_{i\geq 0}
   \frac{\partial^i(fg)z^{i}}{i!}\\
&=\sum_{i \geq 0}
   \frac{1}{i!}\displaystyle\left(\sum^{i}_{j = 0}{n \choose j}\partial^{j}(f)\partial^{i-j}(g)\right)z^{i}\\
&=\sum_{i \geq 0}
   \displaystyle\left(\sum^{i}_{j = 0}\frac{\partial^{j}(f)}{j!}\frac{\partial^{i-j}(g)}{(i-j)!}\right)z^{i}\\
&=\sum_{i\geq 0}
   \frac{\partial^i(f)z^{i}}{i!}\sum_{l\geq 0}
   \frac{\partial^l(g)z^{l}}{l!}\\
 \exp(z\partial)(fg)&=\exp(z\partial)(f)\exp(z\partial)(g)
\end{align*}
\end{proof}

Furthermore, there is an isomorphism of formal power series rings
\begin{align*}\sigma\colon K_X[|z|]\to K_X[|t-1|],\quad \mbox{given by} \quad z\mapsto \sum_{i\geq 1}(-1)^{i+1}\frac{(t-1)^i}{i}\,.
\end{align*}
The inverse of this isomorphism is given by
\begin{align*}
\sigma^{-1}\colon K_X[|t-1|]\to K_X[|z|],\quad \mbox{given by} \quad t-1\mapsto \sum_{i\geq 1}\frac{z^i}{i!}\,.
\end{align*}

This corresponds to the logarithmic series and its inverse is the exponential series. In the sequel we will show that the image of $\sigma\circ\exp(z\partial)$ is contained in $K_X(t-1)=K_X(t)$ and that $\sigma\circ\exp(z\partial)$ is the comorphism of a rational $\GM$-action if and only if $\partial$ is a rational semisimple derivation. With this in view, for every we denote $\phi^*_{\partial}=\sigma\circ\exp(z \partial)$ so that its comorphism corresponds to a map $\phi_\partial\colon \GM\times X\dashrightarrow X$.

\begin{lemma} \label{contained-action}
Let be  $\partial\colon K_X\to K_X$ a rational derivation. Assume that  the image of $\phi^*_\partial$ is contained in $K_X(t)$, then $\phi_{\partial}$ is an action of the multiplicative group.
\end{lemma}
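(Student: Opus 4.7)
My plan is to verify the two commutativity conditions in diagram (\ref{comorphism}) for the comorphism $\phi^*_\partial = \sigma \circ \exp(z\partial)$; the hypothesis that $\phi^*_\partial(K_X) \subseteq K_X(t)$ will be used both to make evaluation at $t=1$ legitimate and, later, to lift a formal identity to a rational one. The identity axiom $\operatorname{ev}_1 \circ \phi^*_\partial = \operatorname{id}_{K_X}$ is essentially immediate: since $\sigma(z) = \sum_{i\geq 1}(-1)^{i+1}(t-1)^i/i$ has $\ord_1 \geq 1$, every term $\tfrac{1}{i!}\sigma(z)^i \partial^i(f)$ with $i \geq 1$ vanishes at $t=1$, leaving $\operatorname{ev}_1(\phi^*_\partial(f)) = f$.

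The heart of the proof is the cocycle square, and my strategy is to transport the standard additive cocycle for exponentials of a derivation to the multiplicative setting via $\sigma$. First I would establish the formal identity
\[
\exp(z_1 \partial) \circ \exp(z_2 \partial) = \exp((z_1 + z_2)\partial)
\]
as $k$-algebra maps $K_X \to K_X[|z_1, z_2|]$, where on the left $\exp(z_1\partial)$ is extended to $K_X[|z_2|]$ by $k[|z_2|]$-linearity; this is a routine calculation with the binomial identity in the spirit of Proposition \ref{prop2.7}. Applying the two-variable substitution $z_i \mapsto \sum_{j\geq 1}(-1)^{j+1}(t_i - 1)^j/j$ to both sides and using $\log t_1 + \log t_2 = \log(t_1 t_2)$ in $k[|t_1-1, t_2-1|]$, the right-hand side becomes $\phi^*_\partial(f)$ with $t$ replaced by $t_1 t_2$, i.e.\ the down-then-right path of diagram (\ref{comorphism}), while the left-hand side becomes $\widetilde{\phi}(\phi^*_\partial(f))$ computed in the $t_2$-variable, i.e.\ the right-then-down path.

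To upgrade this formal equality to one in $K_X(t_1, t_2)$ I would invoke the hypothesis: both sides of the cocycle then lie in $K_X(t_1, t_2)$ (the right side is a substitution in a rational function, the left is $\widetilde{\phi}$ applied to an element of $K_X(t_2)$), and the argument used for the identity axiom shows that both are regular at $(t_1, t_2) = (1,1)$, hence belong to the local ring there, whose completion is $K_X[|t_1-1, t_2-1|]$. Since the completion map is injective, the formal equality forces the rational one, completing the cocycle check.

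The main obstacle, as I see it, is the bookkeeping in the two-variable setting: spelling out $\widetilde{\phi}$ precisely, checking that applying $\sigma$ variable-by-variable is compatible with the additive exponential cocycle, and verifying regularity at $(1,1)$ of both sides so that the formal-to-rational upgrade is legitimate. Once these are in place, the conclusion is essentially formal.
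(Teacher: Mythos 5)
Your proposal is correct and follows essentially the same route as the paper: both reduce the multiplicative cocycle to the additive identity $\exp(z_1\partial)\circ\exp(z_2\partial)=\exp((z_1+z_2)\partial)$ (the paper cites \cite[Proposition~2.4.2]{Now94} for this) and transport it through the logarithm substitution $t_i=\sum_{j\geq 0}z_i^j/j!$, and both verify the identity axiom by evaluating at $t=1$. Your additional step of upgrading the formal equality in $K_X[|t_1-1,t_2-1|]$ to an equality of rational functions via regularity at $(1,1)$ and injectivity of the completion map is a point the paper's proof leaves implicit, so it is a refinement of the same argument rather than a different approach.
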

\begin{proof}

From Proposition~\ref{prop2.7} we have $\phi^*_{\partial}$ is a ring homomorphism. Let 
$$t_1=\sum_{i\geq 0}\frac{z_1^i}{i!}\quad\mbox{and}\quad t_2=\sum_{i\geq 0}\frac{z_2^i}{i!}$$
We have the identity
$$t_1t_2=\displaystyle\sum_{i\geq 0}\frac{1}{i!}(z_1+z_2)^i$$

By Lemma~\ref{lem-demazure}, for every $t\in \GM$ we can specialize $\phi_\partial^*$ to obtain a field automorphism $\phi_\partial^*(t)\colon K_X\to K_X$. We extend $\partial$ to a derivation $\partial\colon K_X(z_1,z_2)\to K_X(z_1,z_2)$ by setting $\partial(z_1)=\partial(z_2)=0$. Since now the derivations $z_1\partial$ and $z_2\partial$ commute, by \cite[Proposition~2.4.2]{Now94} we have 
\begin{align*}
\phi^*_{\partial}(t_1t_2)(f)&=\sigma\circ\exp((z_1+z_2)\partial)(f)\\
&=\sigma\circ\big(\exp(z_1\partial)\circ \exp(z_2\partial)\big)(f)\\
&=\phi^*_{\partial}(t_1)\circ \phi^*_{\partial}(t_2)(f)
\end{align*}

Furthermore, by definition of $\phi^*_\partial\colon K_X\to K_X[|t-1|]$ the composition 
\begin{align*}
\operatorname{ev}_1\circ\phi^*_\partial&=\operatorname{ev}_1\circ\sigma\circ\exp(z\partial)\\
&=\operatorname{ev}_0\circ\exp(z\partial)\\
&=\operatorname{ev}_0\circ\sum_{i\geq 0}\frac{z^i\partial^i}{i!}\\
\operatorname{ev}_1\circ\phi^*_\partial &=\operatorname{Id}_{K_X}.
\end{align*}  
Hence, by \eqref{comorphism} we have that $\phi_\partial^*$ is the comorphism of a multiplicative group action.
\end{proof}

\begin{proposition}\label{prop2.8} 
The following are equivalent.
\begin{enumerate}[$(i)$]
    \item $\partial\colon K_X\to K_X$ is rational semisimple.
    \item The image of $\phi^*_{\partial}$ is contained in $K_X(t)$. 
\end{enumerate}
\end{proposition}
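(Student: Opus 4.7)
The plan is to prove $(i) \Rightarrow (ii)$ by a direct computation on a field-generating family of eigenvectors, and $(ii) \Rightarrow (i)$ by combining Proposition~\ref{prop} with the existence of a rational slice sketched after Definition~\ref{def:rational-slice}.

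For $(i) \Rightarrow (ii)$, I would start from a generating set $\{a_i\}_{i \in I}$ of $K_X$ as a field satisfying $\partial(a_i) = \lambda_i a_i$ with $\lambda_i \in \mathbb{Z}$. Since $\partial^k(a_i) = \lambda_i^k a_i$, the exponential series collapses to
$$\exp(z\partial)(a_i) \,=\, a_i \sum_{k\geq 0} \frac{(\lambda_i z)^k}{k!} \,=\, a_i \exp(\lambda_i z).$$
The map $\sigma$ is the logarithmic substitution $z \mapsto \log t$, so $\sigma(\exp(\lambda_i z)) = \exp(\lambda_i \log t) = t^{\lambda_i}$, and therefore $\phi^*_{\partial}(a_i) = a_i t^{\lambda_i} \in K_X(t)$. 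Proposition~\ref{prop2.7} ensures that $\phi^*_{\partial}$ is a ring homomorphism, and it is injective since $\operatorname{ev}_1 \circ \phi^*_{\partial} = \operatorname{id}_{K_X}$. Its image is then the subfield of $K_X((t-1))$ generated over $k$ by the elements $a_i t^{\lambda_i}$, each of which lies in $K_X(t)$; hence $\phi^*_{\partial}(K_X) \subseteq K_X(t)$.

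For $(ii) \Rightarrow (i)$, Lemma~\ref{contained-action} already implies that $\phi_{\partial}$ is a rational $\GM$-action on $X$. A useful preliminary observation is that $\ker \partial = K_X^{\GM}$: the equation $\partial(f) = 0$ is equivalent to $\exp(z\partial)(f) = f$, and since $\sigma$ fixes $K_X$ and is injective, this is in turn equivalent to $\phi^*_{\partial}(f) = f$. If $\phi_{\partial}$ is trivial, then $\partial = 0$, which is vacuously rational semisimple. Otherwise, I would pass to the faithful quotient of $\phi_{\partial}$ by its (finite, cyclic) kernel $\mu_n \subseteq \GM$, apply the rational slice construction recalled after Definition~\ref{def:rational-slice} to this faithful action, and then read the resulting slice back in terms of the original action: this produces $s \in K_X$ with $\phi^*_{\partial}(s) = t^n s$ for some non-zero $n \in \mathbb{Z}$, and Proposition~\ref{prop} lets me identify $K_X = K_X^{\GM}(s)$. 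The formula $\partial = \operatorname{ev}_1 \circ \frac{d}{dt} \circ \phi^*_{\partial}$ then gives $\partial(s) = n s$.

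Putting these pieces together, any field-generating set of $K_X^{\GM} = \ker \partial$ (whose elements are $\partial$-eigenvectors of eigenvalue $0$), enlarged by the slice $s$ (an eigenvector of eigenvalue $n$), is a field-generating set of $K_X$ consisting of $\partial$-eigenvectors with integer eigenvalues, so $\partial$ is rational semisimple. The most delicate step, and the one I expect to require the most care, is the second direction: to produce the slice one must invoke the structural content of Proposition~\ref{prop} in its equivariant form, and in the non-faithful case convert a weight-one slice of the faithful quotient into a weight-$n$ slice of the original action via the substitution $\tilde{t} = t^n$.
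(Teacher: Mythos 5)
Your first direction is correct and is essentially the paper's own argument: the exponential series collapses to $\phi^*_{\partial}(a_i)=t^{\lambda_i}a_i$ on the generating eigenvectors, and since $\phi^*_{\partial}$ is a ring homomorphism on a field, the image of an arbitrary rational expression in the $a_i$ lands in $K_X(t)$.

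The second direction, however, contains a genuine circularity. The rational slice construction sketched after Definition~\ref{def:rational-slice} starts from the assertion that a faithful rational action admits two semi-invariants $a,b\in K_X$ with coprime weights; but the existence of any supply of semi-invariants --- let alone ones whose weights have gcd $1$ --- is precisely what the implication $(ii)\Rightarrow(i)$ must produce. The paper resolves this by a direct computation: write $\phi^*_{\partial}(f)=t^\ell\,\bigl(\sum_i a_it^i\bigr)/\bigl(\sum_i b_it^i\bigr)$ in a normalized irreducible form, apply the cocycle identity $\phi^*_{\partial}(st)=\phi^*_{\partial}(s)\circ\phi^*_{\partial}(t)$, and deduce from the uniqueness of the normalized representation that each coefficient $a_j,b_j$ is a semi-invariant of weight $j$ with $\partial(a_j)=ja_j$; since $f$ is recovered from these coefficients (evaluate at $t=1$), the collection of all such coefficients generates $K_X$ and exhibits $\partial$ as rational semisimple. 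The facts you want to import --- that semi-invariants generate $K_X$, hence that a faithful action has semi-invariants of coprime weights, and the identification $K_X=K_X^{\GM}(s)$ --- are all established in the paper \emph{downstream} of this argument: Proposition~\ref{cor desc} explicitly cites the proof of Proposition~\ref{prop2.8} for the statement that the set $T$ of semi-invariants generates $K_X$. Proposition~\ref{prop} as stated is only a non-equivariant birational triviality and does not by itself yield $K_X=K_X^{\GM}(s)$. To make your route work you would first need an independent proof that semi-invariants generate $K_X$ (a Koshevoi/Rosenlicht-type argument), at which point the detour through the slice and the faithful quotient becomes unnecessary.
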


\begin{proof}
Assume first that $\partial$ is rational semisimple. Hence, there exists a set of field generators $\{a_i\}_{i\in I}$ of $K_X$ such that
$\partial(a_j)=\lambda_ja_j$ with $\lambda_j\in \Z$. For these generators we have $$\phi^*_{\partial}(a_j)=\sigma\circ\exp(z\partial)(a_j)=\sigma\left(\sum_{i\geq 0}\frac{z^i\partial^i(a_j)}{i!}\right)=\sigma\left(\sum_{i\geq 0}\frac{z^i\lambda^i_ja_j}{i!}\right)=\sigma\left(\sum_{i\geq 0}\frac{(z\lambda_j)^i}{i!}a_j\right)=t^{\lambda_j} a_j\,.$$ 

Let now $f,g\in k[a_i,i\in I]$ with $g\neq 0$. Now, 
\begin{align*}
\phi^*_{\partial}\left(\frac{f}{g}\right)    &=
\sigma\circ\exp(z\partial)\left(\frac{f}{g}\right)\\
&=\sigma\circ\exp(z\partial)\left(\frac{f(a_i,i\in I)}{g(a_i,i\in I)}\right) \\
    &=\frac{\sigma\circ\exp(z\partial)(f(a_i,i\in I))}{\sigma\circ\exp(z\partial)(g(a_i,i\in I))} \\
        &=\frac{f(t^{\lambda_i} a_i,i\in I)}{g(t^{\lambda_i}a_i,i\in I)}\in K_X(t)
\end{align*}
This proves $(i)\to (ii)$. Inspired by Koshevoii \cite{Kos67}, to prove the converse assertion, we let $f\in K_X$ and we let
$$\phi^*_{\partial}(f)=t^\ell\cdot \frac{\sum_i a_it^i}{\sum_i b_it^i},$$
with $\ell \in \ZZ$ and $a_0,b_0\neq 0$. We also may and will assume that the representation is irreducible meaning that $\sum_i a_it^i$ and $\sum_i b_it^i$ are relatively prime. Furthermore, such a   representations of $\phi^*_{\partial}(f)$ is unique if we further assume that $b_0=1$.

By Lemma~\ref{contained-action}, we have $\phi^*_{\partial}(s)\circ\phi^*_{\partial}(t)(f)=\phi^*_{\partial}(st)(f)$. This yields
$$t^\ell\dfrac{\sum_{i} \phi^*_{\partial}(s)(a_i)t^i}{\sum_{i} \phi^*_{\partial}(s)(b_i)t^i}=t^\ell \dfrac{\sum_{i} a_i(st)^i}{\sum_{i} b_i(st)^i}=t^\ell \dfrac{\sum_{i} (a_is^i)t^i}{\sum_{i} (b_is^{i})t^i}.$$
This yields
$\phi^*_{\partial}(a_j)=a_jt^j$ and  $\phi^*_{\partial}(b_j)=b_jt^j$. In particular, this implies that 
$$\phi^*_{\partial}(a_j)=\sigma\circ\exp(z\partial)(a_j)=\sigma\left(\sum_{i\geq 0}\frac{z^i\partial^i(a_j)}{i!}\right)\,.$$ 
while 
$$t^{j}a_j=\sigma\left(\sum_{i\geq 0}\frac{(zj)^i}{i!}\right)\cdot a_j=\sigma\left(\sum_{i\geq 0}\frac{(zj)^i}{i!}a_j\right)=\sigma\left(\sum_{i\geq 0}\frac{z^ij^ia_j}{i!}\right)$$
We conclude that
$$\sum_{i\geq 0}\frac{z^i\partial^i(a_j)}{i!}=\sum_{i\geq 0}\frac{z^ij^ia_j}{i!}$$
In particular, taking equality in the term with $i=1$ we obtain that $\partial(a_j)=ja_j$. Finally, since $f=\dfrac{\sum_{i} a_j}{\sum_{i} b_i}$ we obtain that the set 
$$\left\{a_i,b_i\in K_X\mid t^\ell\cdot \phi_\partial^*(f)=\frac{\sum_i a_it^i}{\sum_i b_it^i} \mbox{ for some } f\in K_X\right\}$$
generates $K_X$ and so $\partial$ is a  rational semisimple derivation.
\end{proof}

Given a rational $\GM$-action $\varphi$ on $X$ with comorphism $\varphi^*\colon K_X\to K_X(t)$, we define the map 
$$D_\varphi\colon K_X\to K_X \quad\mbox{given by}\quad  f\mapsto \operatorname{ev}_{1}\circ \frac{d}{dt}\circ \varphi^*(f)\,,$$
generalizing the usual definition of the infinitesimal generator of a 1-parameter group action. The following lemmas will be required in our proof of our main result.

\begin{lemma} \label{derivation-from-action}
Let $\varphi$ be a rational $\GM$-action on $X$ with comorphism $\varphi^*\colon K_X\to K_X(t)$. Then the follwing hold:
\begin{enumerate} [$(i)$]
    \item The map $D_\varphi$ is a derivation.
    \item We have an equality $D_\varphi=\displaystyle \operatorname{ev}_1\circ\frac{d}{dt}\circ\varphi^*=\operatorname{ev}_0\circ\frac{d}{dz}\circ\sigma^{-1}\circ\varphi^*$.
\end{enumerate}

\end{lemma}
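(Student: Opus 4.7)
The plan for (i) is to observe that $D_\varphi$ is a composition of three maps with well-understood algebraic structure: the $k$-algebra homomorphism $\varphi^*\colon K_X\to\mathcal{O}_\nu\subset K_X(t)$ (where the image lies in $\mathcal{O}_\nu$ by Corollary~\ref{Cor5.2}), the standard derivation $\frac{d}{dt}$ on $K_X(t)$ (which preserves $\mathcal{O}_\nu$), and the $k$-algebra homomorphism $\operatorname{ev}_1\colon\mathcal{O}_\nu\to K_X$. Expanding $D_\varphi(fg)$ using the Leibniz rule for $\frac{d}{dt}$ and the multiplicativity of $\varphi^*$ and $\operatorname{ev}_1$ produces
\[
D_\varphi(fg)=D_\varphi(f)\cdot\operatorname{ev}_1(\varphi^*(g))+\operatorname{ev}_1(\varphi^*(f))\cdot D_\varphi(g),
\]
and the right-hand commutative square in \eqref{comorphism} gives $\operatorname{ev}_1\circ\varphi^*=\operatorname{Id}_{K_X}$, which is exactly what converts these extra factors back into $f$ and $g$. $k$-linearity is immediate since each of the three factors is $k$-linear.

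For (ii), the idea is to transport $\frac{d}{dt}$ through the logarithm--exponential isomorphism $\sigma$ and read off two elementary identities:
\begin{enumerate}[$(a)$]
\item $\operatorname{ev}_0\circ\sigma^{-1}=\operatorname{ev}_1$ on $K_X[|t-1|]$, which holds because $\sigma^{-1}$ sends $t-1\mapsto\sum_{i\geq 1}z^i/i!\in(z)$, so the maximal ideal $(t-1)$ is sent into $(z)$;
\item the chain-rule identity $\sigma\circ\frac{d}{dz}\circ\sigma^{-1}=t\frac{d}{dt}$ as $k$-linear derivations of $K_X[|t-1|]$.
\end{enumerate}
For $(b)$, both sides are continuous $k$-derivations vanishing on $K_X$, hence coincide as soon as they agree on the topological generator $t$; the check reduces to $\sigma^{-1}(t)=e^z$, and then $\sigma(\frac{d}{dz}e^z)=\sigma(e^z)=t=t\cdot\frac{d}{dt}(t)$.

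Combining $(a)$, $(b)$, and $\operatorname{ev}_1(t)=1$ yields
\[
\operatorname{ev}_0\circ\tfrac{d}{dz}\circ\sigma^{-1}\circ\varphi^*=\operatorname{ev}_0\circ\sigma^{-1}\circ t\tfrac{d}{dt}\circ\varphi^*=\operatorname{ev}_1\circ t\tfrac{d}{dt}\circ\varphi^*=\operatorname{ev}_1\circ\tfrac{d}{dt}\circ\varphi^*=D_\varphi,
\]
which is (ii). The only mildly delicate point is the argument for $(b)$: one must verify that both sides actually define continuous derivations in the $(t-1)$-adic topology, so that agreement on $t$ transfers from $K_X[t]$ to the completion $K_X[|t-1|]$. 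Everything else is a direct unwinding of the definitions of $D_\varphi$ and $\sigma$.
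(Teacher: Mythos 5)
Your proposal is correct, and while part (i) is essentially the paper's argument in different clothes, part (ii) takes a genuinely different route. For (i) the paper expands $\varphi^*(f)=\sum_{i\geq 0}a_i(t-1)^i$, identifies $D_\varphi(f)$ with the first-order coefficient $a_1$, and reads the Leibniz rule off the coefficient $a_0b_1+a_1b_0$ of a product using $a_0=f$, $b_0=g$ --- which is precisely your identity $\operatorname{ev}_1\circ\varphi^*=\operatorname{Id}_{K_X}$ from \eqref{comorphism}; your packaging as ``homomorphism, then derivation, then homomorphism, with the two homomorphisms composing to the identity'' is the same computation stated structurally (and your observation that $\tfrac{d}{dt}$ preserves $\mathcal{O}_\nu$ is the needed substitute for the paper's appeal to Corollary~\ref{Cor5.2}). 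For (ii) the paper substitutes $t-1\mapsto\sum_{j\geq 1}z^j/j!$ into the series, differentiates term by term, evaluates at $z=0$, and checks that only the $i=1$ term survives, landing again on $a_1$. You instead establish the conjugation identity $\sigma\circ\tfrac{d}{dz}\circ\sigma^{-1}=t\,\tfrac{d}{dt}$ (the chain rule for $t=e^z$) together with $\operatorname{ev}_0\circ\sigma^{-1}=\operatorname{ev}_1$ and $\operatorname{ev}_1(t)=1$. This is more conceptual and more reusable --- iterating it gives $\sigma\circ\tfrac{d^i}{dz^i}\circ\sigma^{-1}=\bigl(t\tfrac{d}{dt}\bigr)^i$, which bears directly on Lemma~\ref{derivaciones-z-t} --- at the cost of the continuity point you correctly flag: one must note that $\sigma^{\pm 1}$ respect the adic filtrations and that $\tfrac{d}{dz}$ maps $(z)^n$ into $(z)^{n-1}$, so that a continuous $K_X$-derivation of $K_X[|t-1|]$ is determined by its value on $t$. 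The paper's direct coefficient count avoids this but is less illuminating; both arguments are valid.
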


\begin{proof}
By Corollary \ref{Cor5.2}, we know that the image of $\varphi^*$ is contained in $K_X[|t-1|]$. Given $f$ and $g$ in $K_X$, we let $\varphi^*(f)=\sum_{i\geq 0}a_i(t-1)^i$ and $\varphi^*(g)=\sum_{i\geq 0}b_i(t-1)^i$. We have $a_0=f$ and $b_0=g$ since $\varphi^*$ is the comorphism of a $\GM$-action. Moreover, we have 
\begin{align} \label{eq:D-is-a1}
    D_\varphi(f)=\operatorname{ev}_1\circ\frac{d}{dt}\circ\varphi^*(f)=\operatorname{ev}_1\left(\sum_{i\geq 1}a_i\cdot i(t-1)^{i-1}\right)=a_1\,,
\end{align}
so that the map $D_\varphi(f)$ corresponds to the first order term of $\varphi^*(f)$. To prove $(i)$, remark that the composition $D_\varphi=\operatorname{ev}_1\circ \frac{d}{dt}\circ\varphi^*$ is $k$-linear and maps $k$ to $0$. Furthermore, the term of first order term of $fg$ is $a_0b_1+a_1b_0$. This yields the Leibniz rule since $$D_\varphi(fg)=a_0b_1+a_1b_0=fD_\varphi(g)+D_\varphi(f)g\,.$$
Assertion $(ii)$ follows by the following straightforward computation:
\begin{align*}
 \operatorname{ev}_0\circ\frac{d}{dz}\circ\sigma^{-1}\circ\varphi^*(f)&=\operatorname{ev}_0\circ \frac{d}{dz}\circ \sigma^{-1}\left(\sum_{i\geq 0}a_i\cdot (t-1)^i\right)\\
 &=\operatorname{ev}_0\circ \frac{d}{dz}\left(\sum_{i\geq 0}a_i \left(\sum_{j\geq 1}\frac{z^j}{j!}\right)^i\right)\\
 &=\operatorname{ev}_0\left[\sum_{i\geq 1}a_i\cdot i \left(\sum_{j\geq 1}\frac{z^j}{j!}\right)^{i-1}\cdot\left(\sum_{j\geq 0}\frac{z^j}{j!}\right)\right]=a_1=D_\varphi(f)\,.
\end{align*}
\end{proof}

We will now prove that the map $\psi^*:=\sigma^{-1}\circ\varphi^*\colon K_X\to K_X[|z|]$ is the germ of a rational $\GA$-action in $X$. Indeed, letting $s$ and $w$, in the following diagram put as subscript the transendental element over $K_X$ in the target ring.
\begin{align*}
    & K_X\stackrel{\varphi_t^*}{\longrightarrow} K_X[|t-1|]\stackrel{\sigma^{-1}_z}{\longrightarrow} K_X[|z|] \\
    & K_X\stackrel{\varphi_s^*}{\longrightarrow} K_X[|s-1|]\stackrel{\sigma^{-1}_w}{\longrightarrow} K_X[|w|] 
\end{align*}
We also let $\psi_z^*:=\sigma_z^{-1}\circ\varphi_t^*$ and 
$\psi_w^*:=\sigma_w^{-1}\circ\varphi_s^*$. With these definitions we now proof the following lemma.
\begin{lemma} \label{germ-Ga}
With the above notation, we have $\operatorname{ev}_0\circ\psi^*_z=\operatorname{Id}_{K_X}$ and  $\psi_z^*\circ \psi_w^*=\psi_{z+w}^*$.
\end{lemma}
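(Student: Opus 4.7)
The plan is to regard $\psi^*_z=\sigma_z^{-1}\circ\varphi_t^*$ as the composition that expands $\varphi^*(f)\in K_X(t)\cap K_X[|t-1|]$ as a power series in $t-1$ and then substitutes the logarithmic series, i.e.\ $t=e^z$. With this picture in mind, both assertions should fall out of the $\GM$-coaction axioms encoded in the diagram~\eqref{comorphism} together with the exponential identity $e^z e^w=e^{z+w}$ in the ring $K_X[|z,w|]$.

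For assertion $(i)$, I would note that setting $z=0$ forces $t-1\mapsto 0$, so $\operatorname{ev}_0\circ \sigma_z^{-1}$ coincides with $\operatorname{ev}_1$ on $K_X[|t-1|]$. The claim then reduces to $\operatorname{ev}_1\circ \varphi^*=\operatorname{Id}_{K_X}$, which is precisely the right-hand square of \eqref{comorphism}.

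For assertion $(ii)$, I would write $\varphi^*(f)=\sum_{i\geq 0} a_i(t-1)^i$ with $a_i\in K_X$, and set $r_i(t):=\varphi^*(a_i)$. Unfolding
$$\psi_z^*\circ\psi_w^*(f)=\sigma_z^{-1}\circ\widetilde{\varphi_t^*}\circ\sigma_w^{-1}\circ\varphi_s^*(f),$$
I would first apply $\sigma_w^{-1}\circ\varphi_s^*$ to get $\sum_{i\geq 0} a_i(e^w-1)^i\in K_X[|w|]$, then extend $\varphi_t^*$ coefficientwise in $w$ to obtain $\sum_{i\geq 0} r_i(t)(e^w-1)^i$, and finally apply $\sigma_z^{-1}$ to arrive at $\sum_{i\geq 0} r_i(e^z)(e^w-1)^i$. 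The coaction identity (left-hand square of \eqref{comorphism}), transferred to $K_X[|t_1-1,t_2-1|]$, reads $\sum_{i\geq 0} r_i(t_1)(t_2-1)^i=\sum_{i\geq 0} a_i(t_1 t_2-1)^i$. Specializing $t_1=e^z$, $t_2=e^w$ and using $e^z e^w=e^{z+w}$ then yields $\sum_{i\geq 0} a_i(e^{z+w}-1)^i=\psi_{z+w}^*(f)$, as required.

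The main obstacle I expect is bookkeeping rather than conceptual: the coaction identity in \eqref{comorphism} lives in $K_X(t_1,t_2)$, whereas the argument for $(ii)$ takes place in the completed ring $K_X[|t_1-1,t_2-1|]$. This is handled by Corollary~\ref{Cor5.2}, which ensures that $\varphi^*(f)\in K_X(t)\cap K_X[|t-1|]$, so both sides of the coaction identity admit legitimate expansions about $(t_1,t_2)=(1,1)$, and the substitution $t_1=e^z$, $t_2=e^w$ is a well-defined $K_X$-algebra homomorphism between these formal power series rings.
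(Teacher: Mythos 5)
Your proposal is correct and follows essentially the same route as the paper: part $(i)$ is the counit axiom $\operatorname{ev}_1\circ\varphi^*=\operatorname{Id}_{K_X}$ read through $\sigma_z^{-1}$, and part $(ii)$ combines the coassociativity identity $\varphi_t^*\circ\varphi_s^*=\varphi_{st}^*$ (expanded about $(t,s)=(1,1)$) with the exponential identity $e^ze^w=e^{z+w}$ in $K_X[|z,w|]$. The paper performs the same computation, merely commuting $\varphi_t^*$ past $\sigma_w^{-1}$ before invoking coassociativity rather than after; your added remark about why the identity transfers to the completed ring is a reasonable point of care that the paper leaves implicit.
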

\begin{proof}
Letting $f\in K_X$ we assume 
$$\varphi^*(f)=\sum_{i\geq 0}a_i(t-1)^i \quad \mbox{so that} \quad \psi^*_z(f)=\sum_{i\geq 0}a_i\left(\sum_{j\geq 1}\frac{z^j}{j!}\right)^i\,.$$ Since $\varphi^*$ is the comorphism of a $\GM$-action we have $a_0=f$ and so $\operatorname{ev}_0\circ\,\psi^*_z(f)=a_0=f$. To prove the second assertion, remark that
\begin{align*}
    \psi^*_z\circ\psi^*_w(f)&=\sigma^{-1}_z\circ\varphi^*_t\circ \sigma^{-1}_w\circ\varphi^*_s\\
    &=\sigma^{-1}_z\circ\sigma^{-1}_w\circ\varphi^*_t\circ \varphi^*_s(f)\\
    &=\sigma^{-1}_z\circ\sigma^{-1}_w\circ \varphi^*_{st}(f)\\
    &=\sigma^{-1}_z\circ\sigma^{-1}_w\left(\sum_{i\geq 0}a_i(st-1)^i\right)\\
    &=\sigma^{-1}_z\circ\sigma^{-1}_w\left(\sum_{i\geq 0}a_i\bigg[\big(s-1+1\big)\big(t-1+1\big)-1\bigg]^i\right) 
\end{align*}
Applying now $\sigma^{-1}_z\circ\sigma^{-1}_w$ amounts to replace $t-1$ by $\sum_{i\geq 1}\frac{z^i}{i!}$ and $s-1$ by $\sum_{i\geq 1}\frac{w^i}{i!}$. Hence, we obtain
    \begin{align*}
    \psi^*_z\circ\psi^*_w(f)&=\sum_{i\geq 0}a_i\left[\left(\sum_{j\geq 1}\frac{w^j}{j!}+1\right)\left(\sum_{j\geq 1}\frac{z^j}{j!}+1\right)-1\right]^i\\
    &=\sum_{i\geq 0}a_i\left[\left(\sum_{j\geq 0}\frac{w^j}{j!}\right)\left(\sum_{j\geq 0}\frac{z^j}{j!}\right)-1\right]^i\\    
\end{align*}
Now, by the usual properties of the exponential sum, we obtain
\begin{align*}
    \psi^*_z\circ\psi^*_w(f)&=\sum_{i\geq 0}a_i\left[\left(\sum_{j\geq 0}\frac{(w+z)^j}{j!}\right)-1\right]^i\\
    &=\sum_{i\geq 0}a_i\left[\left(\sum_{j\geq 1}\frac{(w+z)^j}{j!}\right)\right]^i\\
   \psi^*_z\circ\psi^*_w(f)     &=\psi^*_{w+z}(f)
\end{align*}
\end{proof}

\begin{lemma} \label{derivaciones-z-t}
Let $\varphi$ be a rational $\GM$-action on $X$ with comorphism $\varphi^*\colon K_X\to K_X(t)$. Then the iterations $D^i_\varphi$ satisfy $\displaystyle D^i_\varphi=\operatorname{ev}_{0}\circ \frac{d^i}{dz^i}\circ \sigma^{-1}\circ\varphi^*$
\end{lemma}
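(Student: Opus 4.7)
The plan is to proceed by induction on $i\geq 1$. The base case $i=1$ is precisely Lemma~\ref{derivation-from-action}$(ii)$. For the inductive step, the key tool I will establish is the operator commutation
\[\frac{d}{dz}\circ \psi^*_z \;=\; \psi^*_z \circ D_\varphi,\]
where $\psi^*_z=\sigma^{-1}\circ \varphi^*\colon K_X\to K_X[|z|]$. Once this commutation is in hand, a straightforward induction gives $\frac{d^i}{dz^i}\circ \psi^*_z = \psi^*_z \circ D_\varphi^i$, and then precomposing with $\operatorname{ev}_0$ and using $\operatorname{ev}_0\circ \psi^*_z = \operatorname{Id}_{K_X}$ from Lemma~\ref{germ-Ga} yields exactly $\operatorname{ev}_0\circ \frac{d^i}{dz^i}\circ\sigma^{-1}\circ\varphi^*(f) = D_\varphi^i(f)$, which is the claim.

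To derive the commutation identity, I would start from the formal group law $\psi_z^*\circ \psi_w^* = \psi_{z+w}^*$ proved in Lemma~\ref{germ-Ga}, regarded as an equality of maps $K_X\to K_X[|z,w|]$ with $\psi^*_z$ extended $k[|w|]$-linearly so that $\psi^*_z(w)=w$. For $f\in K_X$, write $\psi^*_z(f)=\sum_{j\geq 0} c_j(f)z^j$. The right-hand side evaluated at $f$ equals $\sum_{j\geq 0} c_j(f)(z+w)^j$, while the left-hand side equals $\sum_{j\geq 0} \psi^*_z(c_j(f))w^j$. Differentiating both expressions formally in $w$ and evaluating at $w=0$ yields
\[\sum_{j\geq 1} jc_j(f)z^{j-1}\;=\;\psi^*_z(c_1(f)).\]
The left-hand side is by definition $\frac{d}{dz}\psi^*_z(f)$, while $c_1(f)=D_\varphi(f)$ by the base case applied to the expansion, and this gives the desired commutation.

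Carrying out the induction afterwards is routine: assuming $\frac{d^{i-1}}{dz^{i-1}}\circ\psi^*_z = \psi^*_z\circ D_\varphi^{i-1}$, applying $\frac{d}{dz}$ once more and invoking the commutation produces $\psi^*_z\circ D_\varphi^{i}$; evaluating at $z=0$ then closes the proof. The main technical point that requires care is the justification that the formal derivative $\frac{d}{dw}$ commutes with the $k[|w|]$-linear extension of $\psi_z^*$ acting on $K_X[|w|]$, which is what makes the computation of the left-hand side above legitimate. This is immediate from the $k[|w|]$-linearity of the extension, but it requires careful bookkeeping of the roles played by the two formal variables $z$ and $w$, and is the only genuinely subtle step of the argument.
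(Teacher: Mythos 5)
Your proposal is correct and follows essentially the same route as the paper: both derive the key commutation $\frac{d}{dz}\circ\psi^*_z=\psi^*_z\circ D_\varphi$ from the formal group law $\psi^*_z\circ\psi^*_w=\psi^*_{z+w}$ of Lemma~\ref{germ-Ga} (the paper extracts the coefficient of $w^1$ by expanding $(z+w)^\ell$ binomially, you extract it by differentiating in $w$ at $w=0$ --- the same operation), then iterate and compose with $\operatorname{ev}_0\circ\psi^*_z=\operatorname{Id}_{K_X}$.
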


\begin{proof}
We can now use the argument as in \cite[Proposition~4.10]{Dai03} applied to $\psi^*=\sigma^{-1}\circ\varphi^*$. For the convenience of the reader we copy the argument here. Letting $\psi^*_z(f)=\sum_{i\geq 0}a_iz^i$, we have
\begin{align*}
    \sum_{i\geq 0}\psi^*_{z}(a_i)w^i=\psi^*_z\circ\psi^*_w(f)=\psi^*_{w+z}(f)
    &=\sum_{\ell\geq 0}a_\ell(z+w)^\ell \\
    &=\sum_{\ell\geq 0}a_\ell\sum_{i+j=\ell\geq 0}\binom{\ell}{i}z^jw^i\\
    &=\sum_{i\geq 0}\left( \sum_{j\geq 0}a_{i+j}\binom{i+j}{i}z^j\right)w^i
\end{align*}
Hence we obtain 
$$\psi^*(a_i)=\sum_{j\geq 0}a_{i+j}\binom{i+j}{i}z^j\mbox{ and in particular } \psi^*(a_1)=\sum_{j\geq 0}a_{j+1}(j+1)z^j=\sum_{j\geq 1}a_{j}jz^{j-1}$$

Since $D_{\varphi}(f)=a_1$ by \eqref{eq:D-is-a1}, we have $\varphi^*\circ D_\varphi=\frac{d}{dz}\circ\varphi^*$. Indeed,
$$\psi^*(D_{\varphi}(f))=\psi^*(a_1)=\sum_{j\geq 1}a_{j}jz^{j-1}=\frac{d}{dz}\left(\sum_{j\geq 0}a_jz^j\right)=\frac{d}{dz}(\psi^*(f))$$
Hence, we have $\psi^*\circ D^i_{\varphi}=\frac{d^i}{dz^i}\circ\psi^*$ for all $i\geq 0$. Composing on the left with $\operatorname{ev}_0$ we obtain
$$D^i_\varphi=\operatorname{ev}_0\circ \frac{d^i}{dz^i}\circ\sigma^{-1}\circ\varphi^*\,,$$
since $\operatorname{ev}_0\circ\,\psi^*=\operatorname{Id}_{K_X}$ by Lemma~\ref{germ-Ga}.
\end{proof}

In the following proposition, we show that the derivation $D_\varphi$ is rational semisimple.

\begin{proposition}\label{prop-comp-1}
Let $\varphi\colon \GM\times X\dasharrow X$ be a $\GM$-rational action on $X$.  Then the following hold
\begin{itemize}
    \item[$(i)$] $D_\varphi$ is a rational semisimple derivation.
    \item[$(ii)$] The composition $\phi^*_{D_\varphi}=\sigma\circ\exp(zD_\varphi)$ equals $\varphi^*$.
\end{itemize}
\end{proposition}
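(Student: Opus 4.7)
The plan is to prove (ii) first by a direct Taylor-series computation, and then deduce (i) by invoking the equivalence already established in Proposition~\ref{prop2.8}.

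For (ii), let $\psi^* = \sigma^{-1}\circ\varphi^*\colon K_X \to K_X[|z|]$ be the composition that appears in Lemmas~\ref{germ-Ga} and \ref{derivaciones-z-t}. For $f\in K_X$, write its image as a formal power series
\begin{equation*}
\psi^*(f) = \sum_{i\geq 0} a_i z^i, \qquad a_i\in K_X.
\end{equation*}
By Lemma~\ref{derivaciones-z-t}, the $i$-th iterate $D^i_\varphi(f)$ equals $\operatorname{ev}_0\circ \frac{d^i}{dz^i}\circ \psi^*(f)$, which when applied to the above series gives $i!\,a_i$. Hence $a_i = D^i_\varphi(f)/i!$ and therefore
\begin{equation*}
\psi^*(f) = \sum_{i\geq 0}\frac{z^i D^i_\varphi(f)}{i!} = \exp(zD_\varphi)(f).
\end{equation*}
Applying $\sigma$ on both sides yields $\varphi^* = \sigma\circ\exp(zD_\varphi) = \phi^*_{D_\varphi}$, as required.

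For (i), recall that $\varphi$ is a rational $\GM$-action, so its comorphism $\varphi^*$ takes values in $K_X(t)$. By part (ii) we have $\phi^*_{D_\varphi} = \varphi^*$, so in particular the image of $\phi^*_{D_\varphi}$ is contained in $K_X(t)$. Proposition~\ref{prop2.8} then implies that $D_\varphi$ is rational semisimple, establishing (i).

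The main content of the argument is the identification $\psi^*(f) = \exp(zD_\varphi)(f)$ in part (ii); the subtle point is to make sure the expansion of $\varphi^*(f)$ in $K_X[|t-1|]$ guaranteed by Corollary~\ref{Cor5.2} can legitimately be converted, via the isomorphism $\sigma^{-1}$, into an expansion in $K_X[|z|]$ whose coefficients are precisely the normalized iterates of $D_\varphi$. Once Lemma~\ref{derivaciones-z-t} is in hand this is automatic, so after that lemma the deduction of both statements is essentially bookkeeping; the heart of the work has already been done in the earlier lemmas.
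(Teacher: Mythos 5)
Your proof is correct and follows essentially the same route as the paper: both arguments use Lemma~\ref{derivaciones-z-t} to identify the coefficients of $\sigma^{-1}\circ\varphi^*(f)$ with the normalized iterates $D^i_\varphi(f)/i!$, conclude $\sigma^{-1}\circ\varphi^* = \exp(zD_\varphi)$, and then deduce $(i)$ from $(ii)$ via Proposition~\ref{prop2.8}. The only difference is cosmetic: you solve for the coefficients $a_i$ and reassemble the series, whereas the paper expands $\phi^*_{D_\varphi}(f)$ and simplifies term by term.
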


\begin{proof}
The assertion $(i)$ follows directly from $(ii)$ and Proposition~\ref{prop2.8} since in this case the image of $\phi^*_{D_\varphi}$ equals the image of $\varphi^*$ which is contained in $K_X(t)$.

To prove $(ii)$, let $f\in K_X$. By Corollary~\ref{Cor5.2}, we have that $\varphi^*(f)\in K_X[|t-1|]$ and so $\sigma^{-1}\circ \varphi^*(f)\in K_X[|z|]$. Let $\sigma^{-1}\circ\varphi^*(f)=\sum_{j\geq 0}a_jz^j$. By Lemma~\ref{derivaciones-z-t} we have that 
\begin{align*}
    \phi^*_{D_\varphi}(f)&=\sigma\circ\sum_{i\geq 0}\frac{z^iD^i_{\varphi}(f)}{i!} \\
     &=\sigma\circ\sum_{i\geq 0}\frac{z^i}{i!}\cdot \operatorname{ev}_0\circ\frac{d^i}{dz^i}\circ\sigma^{-1}\circ\varphi^*(f) \\
     &=\sigma\circ\sum_{i\geq 0}\frac{z^i}{i!}\cdot \operatorname{ev}_0\circ\frac{d^i}{dz^i}\left(\sum_{j\geq 0}a_jz^j\right) \\
     &=\sigma\circ\sum_{i\geq 0}\frac{z^i}{i!}\cdot \operatorname{ev}_0\circ\left(\sum_{j\geq i}a_j\frac{j!}{(j-i)!}z^{j-i}\right) \\
     &=\sigma\circ\sum_{i\geq 0}\frac{z^i}{i!}\cdot a_i \cdot i! \\
     &=\sigma\circ\sum_{i\geq 0}a_i z^i=\sigma\circ\sigma^{-1}\circ\varphi^*(f)=\varphi^*(f)
\end{align*}
\end{proof}

\begin{proposition}\label{prop-comp-2}
Let $\partial\colon K_X\to K_X$ be a rational semisimple derivation. Then the composition $D_{\phi_\partial}=\operatorname{ev}_1\circ\frac{d}{dt}\circ\sigma\circ\exp(z\partial)$ equals $ \partial$.
\end{proposition}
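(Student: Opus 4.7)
The plan is to leverage the alternative description of the derivation $D_\varphi$ given in Lemma~\ref{derivation-from-action}(ii), which writes it purely in terms of $\sigma^{-1}\circ\varphi^*$ and differentiation in $z$ at $z=0$. This turns the statement into a straightforward cancellation of $\sigma^{-1}\circ\sigma$ followed by an elementary computation with the exponential series.

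First I would check that Lemma~\ref{derivation-from-action}(ii) is applicable. Since $\partial$ is rational semisimple, Proposition~\ref{prop2.8} guarantees that the image of $\phi^*_\partial=\sigma\circ\exp(z\partial)$ lies in $K_X(t)$, and then Lemma~\ref{contained-action} shows that $\phi_\partial\colon\GM\times X\dashrightarrow X$ is a rational $\GM$-action with comorphism $\phi^*_\partial$. Thus Lemma~\ref{derivation-from-action}(ii), applied to $\varphi=\phi_\partial$, yields
\begin{equation*}
D_{\phi_\partial}=\operatorname{ev}_1\circ\frac{d}{dt}\circ\phi^*_\partial=\operatorname{ev}_0\circ\frac{d}{dz}\circ\sigma^{-1}\circ\phi^*_\partial.
\end{equation*}

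Next I would substitute the definition $\phi^*_\partial=\sigma\circ\exp(z\partial)$ and use that $\sigma$ is a ring isomorphism to cancel $\sigma^{-1}\circ\sigma=\operatorname{Id}$, obtaining
\begin{equation*}
D_{\phi_\partial}=\operatorname{ev}_0\circ\frac{d}{dz}\circ\exp(z\partial).
\end{equation*}
Finally, for any $f\in K_X$ a direct computation gives
\begin{equation*}
\frac{d}{dz}\exp(z\partial)(f)=\frac{d}{dz}\sum_{i\geq 0}\frac{z^i\partial^i(f)}{i!}=\sum_{i\geq 1}\frac{z^{i-1}\partial^i(f)}{(i-1)!},
\end{equation*}
and evaluating at $z=0$ selects the $i=1$ term, which is $\partial(f)$. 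Hence $D_{\phi_\partial}(f)=\partial(f)$ for every $f\in K_X$, proving the equality of derivations.

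There is no real obstacle here: the work was already done in establishing Lemma~\ref{derivation-from-action}(ii) and in Proposition~\ref{prop2.8} (which ensures that $\phi_\partial$ is genuinely a rational $\GM$-action so that Lemma~\ref{derivation-from-action} applies). The only thing to be careful about is checking that the applicability hypotheses of Lemma~\ref{derivation-from-action}(ii) are met before invoking it; once that is done, the remaining argument is a one-line simplification followed by the standard termwise derivative of the exponential series.
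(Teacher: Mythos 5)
Your proof is correct and follows essentially the same route as the paper: invoke Lemma~\ref{derivation-from-action}(ii) to rewrite $D_{\phi_\partial}$ as $\operatorname{ev}_0\circ\frac{d}{dz}\circ\sigma^{-1}\circ\phi^*_\partial$, cancel $\sigma^{-1}\circ\sigma$, and differentiate the exponential series termwise at $z=0$. You are in fact slightly more careful than the paper, which applies the lemma without explicitly noting that Proposition~\ref{prop2.8} and Lemma~\ref{contained-action} are needed first to ensure $\phi_\partial$ is a genuine rational $\GM$-action.
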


\begin{proof}
Let $\partial$ be a rational semisimple derivation. Then by Lemma~\ref{derivation-from-action}~(ii), we have
\begin{align*}
D_{\phi_\partial}=\operatorname{ev}_1\circ\frac{d}{dt}\circ\sigma\circ\exp(z\partial)=\operatorname{ev}_0\circ\frac{d}{dz}\circ\sigma^{-1}\circ\sigma\circ\exp(z\partial)=\operatorname{ev}_0\circ\frac{d}{dz}\circ\exp(z\partial)
\end{align*}
Letting now $f\in K_X$ we obtain
\begin{align*}
D_{\phi_\partial}(f)&=\operatorname{ev}_0\circ\frac{d}{dz}\circ\exp(z\partial)(f) 
    =\operatorname{ev}_0\circ\frac{d}{dz}\circ \sum_{i\geq 0}\frac{z^i\partial^i(f)}{i!}
    =\operatorname{ev}_0\circ \sum_{i\geq 1}\frac{iz^{i-1}\partial^i(f)}{i!}=\partial(f)\,.
\end{align*}
This proves the proposition.
\end{proof}

The following is our main theorem in this paper establishing a one-to-one correspondence between rational $\GM$-actions on $X$ and rational semisimple derivations on $K_X$.

\begin{theorem} \label{main-th}
Let $X$ be an algebraic variety. There exists a one-to-one correspondence between the rational $\GM$-actions over $X$ and rational semisimple derivations on $K_X$ given by
\begin{align*}
\Big\{\mbox{Rational semisimple derivations on $K_X$}\Big\} \quad &\longleftrightarrow \quad\Big\{\mbox{Rational $\GM$-actions on $X$}\Big\} \\
\partial \quad & \longrightarrow \quad \phi_\partial \\
D_\varphi \quad &\longleftarrow \quad \varphi
\end{align*}
\end{theorem}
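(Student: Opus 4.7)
The plan is to assemble the theorem from the ingredients already established in the excerpt; essentially all the substantive work has been done in the preceding propositions and lemmas, so what remains is to verify that the two assignments are well defined and mutually inverse.

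First I would verify that the assignment $\partial \mapsto \phi_\partial$ lands in the set of rational $\GM$-actions. Given a rational semisimple derivation $\partial$, Proposition~\ref{prop2.8} $(i)\Rightarrow(ii)$ ensures that the image of $\phi^{*}_\partial = \sigma \circ \exp(z\partial)$ is contained in $K_X(t)$. Then Lemma~\ref{contained-action} immediately upgrades this to the statement that $\phi_\partial$ is a rational $\GM$-action on $X$, so the forward assignment is well defined.

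Next I would verify that the assignment $\varphi \mapsto D_\varphi$ lands in the set of rational semisimple derivations. Lemma~\ref{derivation-from-action}$(i)$ already tells us that $D_\varphi$ is a derivation on $K_X$, and Proposition~\ref{prop-comp-1}$(i)$ refines this to the statement that $D_\varphi$ is rational semisimple. So the reverse assignment is also well defined.

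Finally I would check that the two assignments are mutually inverse. Starting from a rational $\GM$-action $\varphi$ and applying the two maps in succession, Proposition~\ref{prop-comp-1}$(ii)$ gives $\phi^{*}_{D_\varphi} = \varphi^{*}$, so $\phi_{D_\varphi} = \varphi$. Conversely, starting from a rational semisimple derivation $\partial$, Proposition~\ref{prop-comp-2} gives $D_{\phi_\partial} = \partial$. This closes the correspondence. There is no genuine obstacle left at this stage; the only thing requiring care is to cite the hypotheses in the right order so that Proposition~\ref{prop-comp-1}$(i)$ (which logically depends on $(ii)$ of that same proposition, as noted in its proof) is applied only after we have the comorphism identity in hand.
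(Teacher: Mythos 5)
Your proposal is correct and follows essentially the same route as the paper: well-definedness of $\partial\mapsto\phi_\partial$ via Proposition~\ref{prop2.8} and Lemma~\ref{contained-action}, well-definedness of $\varphi\mapsto D_\varphi$ via Proposition~\ref{prop-comp-1}$(i)$, and mutual inverseness via Proposition~\ref{prop-comp-1}$(ii)$ and Proposition~\ref{prop-comp-2}. Your closing remark about the internal dependence of Proposition~\ref{prop-comp-1}$(i)$ on its part $(ii)$ is a valid point of care, but it introduces no new content beyond the paper's argument.
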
 

\begin{proof}
Let $X$ be an algebraic variety. If $\partial\colon K_X\to K_X$ is a rational semisimple derivation, then $\phi^*_\partial$ is a rational $\GM$-action by Lemma~\ref{contained-action} and Proposition~\ref{prop2.8}. On the other hand, if $\varphi$ is a rational $\GM$-action on $X$, then, by Proposition~\ref{prop-comp-1}~$(i)$, we have that $D_\varphi$ is rational semisimple. The fact that these maps are mutually inverse to each other is proven in Proposition~\ref{prop-comp-1}~(ii) and Proposition~\ref{prop-comp-2}.
\end{proof}

\section{Examples and applications}

In this section we provide several examples and applications of our main theorem. To perform the computations in the sequel, we need the following technical lemma proving that conjugation of a rational semisimple derivation by an automorphism $\varphi$ amounts to conjugation of the corresponding $\GM$-action by the same automorphism $\varphi$.

\begin{lemma}\label{lemma-ejemplos}
Letting $\partial\colon K_X\to K_X$ be a rational semisimple derivation and $\varphi^*\colon K_X\to K_X$ be a $k$-automorphim, we have  
$$\phi^*_{\varphi^*\circ \partial\circ(\varphi^*)^{-1}}=\varphi^*\circ\phi^*_{\partial}\circ(\varphi^*)^{-1}\,$$

\end{lemma}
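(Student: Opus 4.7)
The plan is to show both sides of the claimed equality are equal as maps $K_X\to K_X(t)$ by unwinding the definition $\phi_\partial^*=\sigma\circ\exp(z\partial)$ and exploiting the fact that conjugation commutes with power series manipulations. Before beginning, I would first observe that the derivation on the left-hand side is genuinely rational semisimple, so that the notation $\phi^*_{\varphi^*\circ\partial\circ(\varphi^*)^{-1}}$ makes sense: if $\{a_i\}_{i\in I}$ is a field-generating set for $K_X$ with $\partial(a_i)=\lambda_i a_i$, then $\{\varphi^*(a_i)\}_{i\in I}$ is also a field-generating set (since $\varphi^*$ is a $k$-automorphism), and a direct computation gives $(\varphi^*\circ\partial\circ(\varphi^*)^{-1})(\varphi^*(a_i))=\lambda_i\varphi^*(a_i)$.

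Next, I would extend $\varphi^*$ coefficient-wise to a $k[|z|]$-algebra automorphism $\varphi^*\colon K_X[|z|]\to K_X[|z|]$ and to a $k[|t-1|]$-algebra automorphism $\varphi^*\colon K_X[|t-1|]\to K_X[|t-1|]$. Since $\sigma$ is defined by substituting a power series in $t-1$ for $z$ with coefficients in $\mathbb{Q}\subset k$, the extended automorphism $\varphi^*$ commutes with $\sigma$, that is, $\sigma\circ\varphi^*=\varphi^*\circ\sigma$ on $K_X[|z|]$.

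The core step is the identity
\begin{equation*}
\exp\bigl(z\cdot\varphi^*\circ\partial\circ(\varphi^*)^{-1}\bigr)=\varphi^*\circ\exp(z\partial)\circ(\varphi^*)^{-1}\,.
\end{equation*}
This follows from the telescoping observation $(\varphi^*\circ\partial\circ(\varphi^*)^{-1})^i=\varphi^*\circ\partial^i\circ(\varphi^*)^{-1}$ for all $i\geq 0$, applied term by term in the defining series of the exponential, combined with the fact that $\varphi^*$, extended coefficient-wise, is $k[|z|]$-linear and hence commutes with the division by $i!$ and multiplication by $z^i$.

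Combining these ingredients gives the desired chain of equalities:
\begin{equation*}
\phi^*_{\varphi^*\circ\partial\circ(\varphi^*)^{-1}}=\sigma\circ\exp\bigl(z\cdot\varphi^*\circ\partial\circ(\varphi^*)^{-1}\bigr)=\sigma\circ\varphi^*\circ\exp(z\partial)\circ(\varphi^*)^{-1}=\varphi^*\circ\sigma\circ\exp(z\partial)\circ(\varphi^*)^{-1}=\varphi^*\circ\phi^*_\partial\circ(\varphi^*)^{-1}\,.
\end{equation*}
The only subtle point, and the one I would be most careful to justify cleanly, is the bookkeeping that the symbol $\varphi^*$ appearing inside $\exp$ and $\sigma$ really denotes the coefficient-wise extension to the relevant formal power series ring; once this convention is fixed, the proof reduces to routine manipulation of formal series and there are no genuine analytic obstacles.
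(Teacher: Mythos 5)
Your proof is correct and follows essentially the same route as the paper's: both arguments rest on the telescoping identity $(\varphi^*\circ\partial\circ(\varphi^*)^{-1})^i=\varphi^*\circ\partial^i\circ(\varphi^*)^{-1}$ applied termwise in the exponential series, together with the fact that the coefficient-wise extension of $\varphi^*$ commutes with $\sigma$. The only additions are your preliminary check that the conjugated derivation is again rational semisimple and your explicit bookkeeping of the power-series extensions, both of which the paper leaves implicit.
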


\begin{proof}
Since $(\varphi^*\circ \partial\circ (\varphi^*)^{-1})^i=\varphi^*\circ \partial^i\circ(\varphi^*)^{-1}$ and $\sigma$ commutes with $\varphi^*$, we have 

\begin{align*}
\phi_{\varphi^*\circ \partial\circ (\varphi^*)^{-1}}&=\sigma\circ\sum_{i\geq 0}\frac{z^i(\varphi^*\circ \partial\circ (\varphi^*)^{-1})^i}{i!}
=\sigma\circ \varphi^*\circ\left(\sum_{i\geq 0}\frac{z^i\partial^i}{i!}\right)\circ(\varphi^*)^{-1}
=\varphi^*\circ \phi^*_{\partial}\circ (\varphi^*)^{-1}  
\end{align*}
\end{proof}

\begin{example}
Letting $X=\AF^2=\spec k[x,y]$ we let $E$ be the Euler derivation given by  
$$E=ax\frac{\partial}{\partial x}+by\frac{\partial}{\partial y} \quad \mbox{with}\quad  a,b\in \mathbb{Z}\,.$$
This derivation is a regular semisimple derivation corresponding to the linear $\GM$-action on $X$ given by
$$\GM\times X\to X \quad\mbox{where}\quad (t,(x,y))\mapsto (t^ax,t^by)\,.$$
If we conjugate $E$ with the birational map  
$$\varphi\colon X\dashrightarrow X \quad\mbox{given by}\quad (x,y)\mapsto  ((x-1)(y-1)+1,y)\,,$$
whose inverse is 
$$\varphi^{-1}\colon X\dashrightarrow X \quad\mbox{given by}\quad (x,y)\mapsto  \left(\frac{x-1}{y-1}+1,y\right)\,.$$
We define the rational semisimple derivation $\partial=\varphi^*\circ E\circ(\varphi^*)^{-1}$. A straightforward computation shows that
$$\partial=\left( \frac{a((x-1)(y-1)+1)-b(x-1)y}{y-1}\right)\frac{\partial}{\partial x} + by \frac{\partial}{\partial y}$$

By Lemma~\ref{lemma-ejemplos}, we have $\phi^*_{\partial}=\varphi^*\circ \phi^*_{E}\circ (\varphi^*)^{-1}$. More explicitly we obtain:
$$\phi_{\partial}\colon \GM\times X\dashrightarrow X \quad\mbox{given by}\quad   (x,y)\mapsto\left(\frac{t^a((x-1)(y-1)+1)-1}{t^by-1}+1,t^by\right)\,.$$

We can recover the rational derivation $\partial$ by computing $\operatorname{ev}_1\circ\frac{d}{dt}\circ\phi^*_{\partial}$. Indeed, a tedious computation shows that 
\begin{align*}
\frac{d}{dt}(\phi^*_{\partial}(x))&=\frac{\left(at^{a-1}[(x-1)(y-1)+1]\right)(t^by-1)-bt^{b-1}y[t^a[(x-1)(y-1)+1]-1]}{(t^by-1)^2} \\
\frac{d}{dt}(\phi^*_{\partial}(y))&=\frac{d}{dt}(t^by)=bt^{b-1}y\,.
\end{align*}
So that
 \begin{align*}
 \operatorname{ev}_1\circ \frac{d}{dt}\circ\phi^*_{\partial}(x)&=\frac{a((x-1)(y-1)+1)-b(x-1)y}{y-1} \\
 \operatorname{ev}_1\circ\frac{d}{dt}\circ\phi^*_{\partial}(y)&=by\,,
\end{align*}
recovering the initial derivation $\partial$.
\end{example}

Recall that a rational slice of a rational $\GM$-action $\varphi$ is a function $s\in K_X$ such that $\varphi^*(s)=ts$. We can also characterize slices in terms of the corresponding rational semisimple derivation as we show in the following lemma.

\begin{corollary} \label{rational-slice}
 Letting $X$ be an algebraic variety with fields of the rational function $K_X$, we let $\partial\colon K_X\to K_X$ be a rational semisimple derivation. Then $s\in K_X$ is a rational slice of $\phi^*_\partial(t)$ if and only if  $\partial(s)=s$.
\end{corollary}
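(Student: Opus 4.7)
The plan is to prove both implications by direct computation, using the explicit formula $\phi^*_\partial = \sigma \circ \exp(z\partial)$ for the forward direction and Proposition~\ref{prop-comp-2} for the backward direction.

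For the $(\Leftarrow)$ direction, I would assume $\partial(s) = s$ and argue by induction that $\partial^i(s) = s$ for every $i \geq 0$. Then
\[
\exp(z\partial)(s) = \sum_{i\geq 0}\frac{z^i \partial^i(s)}{i!} = s\cdot\sum_{i\geq 0}\frac{z^i}{i!}.
\]
The key observation is that the power series $\sum_{i\geq 0} z^i/i!$ is precisely $1 + \sigma^{-1}(t-1)$, so $\sigma$ maps it to $t$. Applying $\sigma$ (which is a $K_X$-algebra homomorphism) and using this identity gives $\phi^*_\partial(s) = ts$, as required.

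For the $(\Rightarrow)$ direction, assume $\phi^*_\partial(s) = ts$. By Proposition~\ref{prop-comp-2}, the derivation $\partial$ is recovered as $D_{\phi_\partial} = \operatorname{ev}_1 \circ \frac{d}{dt} \circ \phi^*_\partial$. Applying this to $s$:
\[
\partial(s) = \operatorname{ev}_1\!\left(\frac{d}{dt}(ts)\right) = \operatorname{ev}_1(s) = s.
\]

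Both directions reduce to one-line computations, so there is no substantial obstacle. The only subtlety worth flagging is the identity $\sigma\bigl(\sum_{i\geq 0} z^i/i!\bigr) = t$, which follows immediately from the explicit formula for $\sigma^{-1}$ recalled just before Lemma~\ref{contained-action}, but is the one ingredient that makes the forward direction work. Everything else is formal.
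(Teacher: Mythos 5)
Your proof is correct and follows essentially the same route as the paper: the forward direction expands $\exp(z\partial)(s)=s\sum_{i\geq 0}z^i/i!$ and applies $\sigma$ to get $ts$, and the backward direction applies $\operatorname{ev}_1\circ\frac{d}{dt}$ to $ts$ exactly as in Proposition~\ref{prop-comp-2}. Your explicit justification of the identity $\sigma\bigl(\sum_{i\geq 0}z^i/i!\bigr)=t$ is a welcome touch that the paper leaves implicit.
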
 

\begin{proof}
Assume first that $\partial(s)=s$. Then $\partial^i(s)=s$ for every $i\geq 0$ and so we have
\begin{align*}
    \phi^*_{\partial}(s)&=\sigma\circ\exp{z\partial}(s)=s\cdot\sigma\left(\sum_{i\geq 0}\frac{z^i}{i!}\right)=ts\,.
\end{align*}
Now if $\phi^*_{\partial}(s)=ts$ then $\partial(s)=\operatorname{ev}_1\circ\frac{d}{dt}\circ\phi^*_{\partial}(s)=s$
\end{proof}

Furthermore, a slice provide a ruling of the field $K_X$ over the field of invariants $K^{\GM}_X$ of the $\GM$-action.

\begin{proposition}\label{cor desc}
If $s$ is a rational slice for an faithful rational $\GM$-action $\varphi\colon \GM\times X\dashrightarrow X$, then $s$ is transcental over $K^{\GM}_X$,   $K_X=K^{\GM}_X(s)=(\ker\partial)(s)$ and $\partial=s\frac{d}{d s}$ on $K^{\GM}_X(s)$.
\end{proposition}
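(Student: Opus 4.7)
The plan is to first identify the invariant subfield with $\ker\partial$, then treat the three conclusions in turn. For the identification: if $\partial(f)=0$ then $\exp(z\partial)(f)=f$, so $\phi^*_\partial(f)=\sigma(f)=f$, showing $f\in K_X^{\GM}$. Conversely, if $\varphi^*(f)=f$ (viewed as a constant in $K_X(t)\subset K_X[|t-1|]$), then $D_\varphi(f)=\operatorname{ev}_1\circ\frac{d}{dt}(f)=0$, and Theorem~\ref{main-th} gives $\partial(f)=D_\varphi(f)=0$. Thus $K_X^{\GM}=\ker\partial$, which already yields the second equality $K_X^{\GM}(s)=(\ker\partial)(s)$.

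For transcendence, I would suppose for contradiction that $\sum_{i=0}^n a_is^i=0$ is a nontrivial relation with $a_i\in K_X^{\GM}$. Applying $\varphi^*$ and using $\varphi^*(a_i)=a_i$ together with $\varphi^*(s)=ts$ yields $\sum_{i=0}^n (a_is^i)\,t^i=0$ in $K_X(t)$. Viewing this equality inside $K_X[t]$, each coefficient $a_is^i$ must vanish; since $s\neq 0$ (a rational slice is nonzero, being constructed as a nonzero product of semi-invariants), every $a_i$ is zero, a contradiction.

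For the field equality $K_X=K_X^{\GM}(s)$, the hypothesis that $\partial$ is rational semisimple (Definition~\ref{def:rat-semisimple}) provides a field-generating set $\{a_i\}_{i\in I}\subset K_X$ with $\partial(a_i)=\lambda_i a_i$; by the computation in Proposition~\ref{prop2.8} this is equivalent to $\varphi^*(a_i)=t^{\lambda_i}a_i$. For each $i$, one checks $\varphi^*(a_i/s^{\lambda_i})=(t^{\lambda_i}a_i)/(ts)^{\lambda_i}=a_i/s^{\lambda_i}$, so $a_i/s^{\lambda_i}\in K_X^{\GM}$ and hence $a_i\in K_X^{\GM}(s)$. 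Since the $a_i$ generate $K_X$ as a field, $K_X\subseteq K_X^{\GM}(s)$; the reverse inclusion is immediate.

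Finally, for $\partial=s\,\frac{d}{ds}$: since $s$ is transcendental over $K_X^{\GM}$, the $K_X^{\GM}$-linear derivation $\frac{d}{ds}$ of $K_X^{\GM}(s)$ sending $s$ to $1$ is well-defined, and so $s\,\frac{d}{ds}$ is a derivation of $K_X=K_X^{\GM}(s)$ that annihilates $K_X^{\GM}=\ker\partial$ and maps $s\mapsto s=\partial(s)$. Since any derivation of a purely transcendental extension $K_X^{\GM}(s)$ is determined by its restriction to $K_X^{\GM}$ and its value on $s$, we conclude $\partial=s\,\frac{d}{ds}$ on all of $K_X$. The main obstacle is the field-equality step, which crucially relies on the rational semisimple hypothesis to supply a semi-invariant generating set; once that reduction is made, the other steps are formal consequences of Theorem~\ref{main-th}.
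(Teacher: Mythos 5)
Your proof is correct and follows essentially the same route as the paper: both arguments rest on the fact (from the proof of Proposition~\ref{prop2.8}) that the semi-invariants generate $K_X$, normalize each semi-invariant by a power of the slice $s$ to land in $K_X^{\GM}$, prove transcendence by applying $\varphi^*$ to a putative algebraic relation and using that $t$ is transcendental over $K_X$, and deduce $\partial=s\frac{d}{ds}$ from $\partial(K_X^{\GM})=0$ and $\partial(s)=s$. Your explicit verification that $K_X^{\GM}=\ker\partial$ is a small welcome addition that the paper leaves implicit.
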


\begin{proof}
We define the set $T=\{a\in K^*_X \mid \varphi^*(a)=t^{i}a \mbox{ with } i \in \mathbb{Z}\}$. In the proof Proposition~\ref{prop2.8} we proved that $T$ generates $K_X$. Moreover, $T$ is a group under multiplication. We have $\varphi^*(a)=t^0a$ for all $a\in K^{\GM}_X$. Therefore $T_0:=K^{\GM}_X\setminus\{0\}$ is a subgroup of $T$. Let now $T/T_0$, given $a,b\in T$ such that $\varphi^*_t(a)=t^ia$ and $\varphi^*_t (b)=t^ia$, for some $i\in\ZZ$, we have $\varphi^*(ab^{-1})=ab^{-1}$ implies $ab^{-1}\in T_0$. Hence,  $a$ and $b$  differ by a element of $T_0$. 

We define the group homomorphism   $T\to \ZZ$ given by $a\mapsto i$ where $\varphi^*(a)=t^ia$. This homomorphism is surjective by the existence of a rational slice, see Definition~\ref{def:rational-slice} and below. Moreover, its kernel is $T_0$. We conclude that $T/T_0\simeq\ZZ$ with a rational slice $s$ as generator. Since $T_0$ generated the field $K^{\GM}_X$ and $T$ generates the field $K_X$ we obtain that $K_X=K^{\GM}_X(s)$.

Assume now that $s$ is algebraic over $K^{\GM}_X$, i.e., assume that there exists a non trivial polynomial $P\in K^{\GM}_X[x]$ such that $P(s)=0$, then $\varphi^*(P(s))=P(ts)=0$. This is a contradiction since $t$ is is transcendental over $K_X$ and so the same holds over the subfield $K^{\GM}_X$. We conclude that therefore $s$ is transcendental over $K^{\GM}_X$. Finally, the structure of $\partial$ given as $\partial=s\frac{d}{d s}$ on $K^{\GM}_X(s)$ follows since $\partial(K^{\GM}_X)=0$ and $\partial(s)=s$.
\end{proof}

As in the rational case treated above, given a regular $\GM$-action $\varphi$ on $X$, we define a derivation of the structure sheaf $D_\varphi\colon \OO_X\to \OO_X$ given over every affine open set $U\subseteq X$ by
$$D_\varphi\colon \OO_X(U)\to \OO_X(U) \quad\mbox{given by}\quad  f\mapsto \operatorname{ev}_{1}\circ \frac{d}{dt}\circ \varphi^*(f)\,.$$
Any derivation $\partial\colon \OO_X\to\OO_X$ induces a derivation $K_X\to K_X$ simply by extending $\partial\colon \OO_X(U)\to\OO_X(U)$ to the field of fractions $\fract(\OO_X(U))=K_X$ for any affine open set via the Leibniz rule. We denote this derivation also by the same symbol $\partial\colon K_X\to K_X$.

In the next proposition, we characterize the derivations of the structure sheaf $X$ that come from a regular $\GM$-action in the case where $X$ is semi-affine. Recall that a variety $X$ is called semi-affine if the canonical
morphism $X\to \spec \OO_{X}(X)$
is proper. In this case $\mathcal{O}_{X}(X)$ is finitely generated and so $\spec\mathcal{O}_{X}(X)$ is an affine variety \cite[corollary 3.6]{GoLa73}. For
instance, complete or affine $k$-varieties are semi-affine, blow-ups of semi-affine varieties are also semi-affine.

\begin{proposition}\label{prop:Regular-actions}
Regular $\GM$-actions on a semi-affine variety $X$ are in one-to-one correspondence with rational
semisimple derivations $\partial\colon\mathcal{O}_{X}\to\mathcal{O}_{X}$
such that the derivation on global sections $\partial_X\colon\OO_{X}(X)\to\OO_{X}(X)$
on the ring of global regular functions is semisimple with integers eigenvalues.
\end{proposition}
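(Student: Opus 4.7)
The plan is to treat the two implications separately; both rest on Theorem~\ref{main-th} combined with a careful analysis of global versus local sections.

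For the forward direction, let $\varphi\colon \GM \times X \to X$ be regular. The identity axiom $\varphi(1,x) = x$ guarantees that $\varphi^{-1}(U)$ contains $\{1\} \times U$ as an open subset for every affine open $U \subseteq X$; pulling back along $\varphi$ and applying $\operatorname{ev}_1 \circ \tfrac{d}{dt}$ (as in Lemma~\ref{derivation-from-action}) gives a derivation of $\OO_X(U)$, and these glue to a sheaf derivation $D_\varphi\colon \OO_X \to \OO_X$. Passing to $K_X$, Theorem~\ref{main-th} shows $D_\varphi$ is rational semisimple. For the integer semisimplicity of $D_{\varphi,X}$, the identity $\OO_{\GM \times X}(\GM \times X) = \OO_X(X) \otimes k[t,t^{-1}]$ (valid because $\GM$ is affine) ensures that the global co-action $\varphi^*\colon \OO_X(X) \to \OO_X(X) \otimes k[t,t^{-1}]$ equips $\OO_X(X)$ with a rational $\GM$-representation structure, yielding a weight decomposition $\OO_X(X) = \bigoplus_n A_n$ on which $D_{\varphi,X}$ acts as multiplication by $n$.

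For the converse, given such $\partial$, Theorem~\ref{main-th} produces a rational action $\phi_\partial$ on $X$, while the integer grading of $\OO_X(X)$ induced by $\partial_X$ yields a regular $\GM$-action $\overline{\varphi}$ on $Y=\spec\OO_X(X)$. The compatibility $\pi \circ \phi_\partial = \overline{\varphi} \circ (\operatorname{id}_{\GM} \times \pi)$ of rational maps follows because both sides are obtained by applying the same exponential formula to $\partial|_{\OO_X(X)}$. The heart of the proof is to upgrade $\phi_\partial$ from rational to regular, and this is where the sheaf-level hypothesis and semi-affinity enter. For each affine open $U \subseteq X$, the sheaf hypothesis ensures $\phi^*_\partial(f) = \sigma(\exp(z\partial)(f)) \in \OO_X(U)[|t-1|]$ for every $f \in \OO_X(U)$, while rational semisimplicity gives $\phi^*_\partial(f) \in K_X(t)$; hence $\phi^*_\partial(\OO_X(U)) \subseteq K_X(t) \cap \OO_X(U)[|t-1|]$. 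I will establish the following local lemma: any element in this intersection, viewed as a rational function on $\GM \times U$, is regular at every point of $\{1\} \times U$. Writing $r = P/Q$ with $P,Q \in \OO_X(U)[t]$ coprime in $\OO_X(U)[t]$, the hypothesis $a_i \in \OO_X(U)$ (for the Taylor expansion $r = \sum a_i(t-1)^i$) forces either $Q(1) \in \OO_X(U)^{\ast}$ or $r \in \OO_X(U)[t]$; in either case, the pole divisor of $r$ avoids $\{1\} \times U$. Applying this to finitely many algebra generators of $\OO_X(U)$ yields an open neighborhood of $\{1\} \times U$ in $\GM \times U$ on which $\phi_\partial$ is regular with image in $U$; gluing over an affine cover of $X$ and invoking quasi-compactness, one finds an open $V \subseteq \GM$ containing $1$ with $\phi_\partial$ regular on $V \times X$.

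To propagate regularity from $V \times X$ to all of $\GM \times X$, I use the group law. Since $\GM$ is irreducible and $V$ is a non-empty open subset containing $1$, one has $V \cdot V = \GM$ (any two non-empty opens of $\GM$ meet). The morphism $\Phi\colon V \times V \times X \to X$ defined by $(g_1,g_2,x) \mapsto \phi_\partial(g_1, \phi_\partial(g_2, x))$ is regular, and it coincides with $\phi_\partial \circ (m \times \operatorname{id}_X)$ on a dense open, where $m\colon V \times V \to \GM$ is multiplication; the associativity of the rational action, combined with separatedness of $X$ and irreducibility of the incidence locus $\{g_1 g_2 = g_1' g_2'\}$, forces $\Phi$ to be constant on fibers of the smooth surjective map $m \times \operatorname{id}_X$, so descent yields a regular morphism $\GM \times X \to X$ extending $\phi_\partial$. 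I expect the main obstacle to be the local lemma on $K_X(t) \cap \OO_X(U)[|t-1|]$: everything else is either routine bookkeeping or standard descent.
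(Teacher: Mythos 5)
Your forward direction is correct, and it takes a genuinely different route from the paper: the paper invokes Rosenlicht's theorem to produce a $\GM$-invariant affine open $U$ and then restricts the semisimple structure from $\OO_X(U)$ to the subring $\OO_X(X)$, whereas you obtain the weight decomposition of $\OO_X(X)$ directly from the global co-action via $\OO_{\GM\times X}(\GM\times X)=\OO_X(X)\otimes k[t,t^{-1}]$. Both work; yours avoids Rosenlicht at the cost of the K\"unneth identification.

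The converse, however, has a genuine gap, located exactly where you appeal to ``quasi-compactness'': an open neighbourhood $W$ of $\{1\}\times X$ in $\GM\times X$ need \emph{not} contain a tube $V\times X$ with $V\ni 1$ open, because the Zariski topology on a product is not the product topology and the tube lemma fails. Concretely, take $X=\AF^2\setminus\{(1,0)\}$ (not semi-affine, since $X\to\spec\OO_X(X)=\AF^2$ is a non-closed open immersion) and $\partial$ the Euler derivation $x\frac{\partial}{\partial x}+y\frac{\partial}{\partial y}$: this is a sheaf derivation, rational semisimple, with $\partial_X$ semisimple with integer eigenvalues on $\OO_X(X)=k[x,y]$, and your local lemma does hold here --- on the affine chart $\{x\neq1\}$ one has $\phi^*_\partial((x-1)^{-1})=(tx-1)^{-1}=\frac{1}{x-1}\sum_i(-1)^i\frac{x^i}{(x-1)^i}(t-1)^i$, so $\phi_\partial\colon(t,(x,y))\mapsto(tx,ty)$ is regular on the neighbourhood $(\GM\times X)\setminus\{tx=1,\,y=0\}$ of $\{1\}\times X$. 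But no tube $V\times X$ lies in that neighbourhood (every open $V\ni1$ contains some $t_0\neq1$, and $(t_0,(1/t_0,0))$ is a point of indeterminacy), and indeed $\phi_\partial$ is \emph{not} a regular action on $X$. This also shows the argument cannot be repaired as written: up to the tube claim you have used only the sheaf-level hypothesis and never the semi-affinity of $X$, yet the example shows the conclusion is false without it. The paper puts properness to work at precisely this juncture: the grading of $\OO_X(X)$ gives a regular action on $X_0=\spec\OO_X(X)$ for which $p\colon X\to X_0$ is equivariant, and the valuative criterion applied to each orbit map $\xi\colon\GM\dashrightarrow X$ lying over the regular $\xi_0\colon\GM\to X_0$ forces $\varphi$ to be everywhere defined. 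Your step propagating regularity from a tube to all of $\GM\times X$ via $V\cdot V=\GM$ and descent along multiplication is the standard Weil--Rosenlicht argument and would be fine if you had a tube; the fix is to replace the tube-lemma step by the properness/valuative-criterion argument.
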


\begin{proof}
By Rosenlicht theorem \cite{Ros56}, for any regular $\GM$-action on $X$ there exists of a nonempty $\GM$-invariant affine open subset $U$. Hence, $\partial_U\colon \OO_X(U)\to \OO_X(U)$ is semisimple with integer eigenvalues and since $\OO_{X}(X)\subset\OO_{X}(U)$
it follows that $\partial_X$ is a semisimple
derivation of $\OO_{X}(X)$ with integer eigenvalues.

Conversely, let $\partial\colon\mathcal{O}_{X}\to\mathcal{O}_{X}$
be a derivation such that $\partial_{0}=\partial_X\colon\OO_{X}(X)\to \OO_{X}(X)$
is semisimple with integer eigenvalues. Then $\partial_{0}$ induces a possibly trivial
regular $\GM$-action $\varphi_{0}\colon\GM\times X_{0}\to X_{0}$
on $X_{0}=\spec \OO_X(X)$ for which the
canonical morphism $p\colon X\to X_{0}$ is $\GM$-equivariant.
In particular, for every point $x\in X$, letting $\xi=\varphi\mid_{\GM\times\left\{ x\right\} }\colon\GM\dashrightarrow X$,
$t\mapsto\varphi(t,x)$ and $\xi_{0}=\varphi_{0}\mid_{\GM\times p(x)}\colon\GM\to X_{0}$,\,$t\mapsto\varphi_{0}(t,p(x))$
, we have a commutative diagram \[\xymatrix {\GM \ar@{-->}[r]^{\xi} \ar[dr]_{\xi_0} & X \ar[d]^{p} \\ & X_0.}\]
Since $p$ is proper, we deduce from the valuative criterion for properness
applied to the local ring of every closed point $t\in\GM$
that $\varphi$ is defined at every point $\left(x,t\right)\in\GM\times X$
whence is a regular $\GM$-action on $X$. 
\end{proof}


\bibliographystyle{alpha} \bibliography{math}

\end{document}